\documentclass[12pt]{amsart}

\usepackage{amsthm, amsmath, amssymb, amsrefs, fullpage}

\newcommand{\R}{\mathbb{R}}
\newcommand{\T}{\mathbb{T}} 
\newcommand{\C}{\mathbb{C}} 
\newcommand{\D}{\mathbb{D}} 
\newcommand{\Z}{\mathbb{Z}}
\newcommand{\cd}{\overline{\D}} 
\newcommand{\al}{\alpha}

\newcommand{\mcF}{\mathcal{F}}

\numberwithin{equation}{section}

\newtheorem{theorem}{Theorem}[section]
\newtheorem{prop}[theorem]{Proposition}
\newtheorem{conjecture}[theorem]{Conjecture}
\newtheorem{corollary}[theorem]{Corollary}
\newtheorem{lemma}[theorem]{Lemma}

\theoremstyle{definition}
\newtheorem{definition}[theorem]{Definition}

\newtheorem{example}[theorem]{Example}

\title{Extreme points and saturated polynomials}

\author{Greg Knese}

\address{Washington University in St. Louis\\ Department of Mathematics\\ St. Louis, MO 63130}
\email{geknese@wustl.edu}

\thanks{Partially supported by NSF grant DMS-1363239}

\date{\today}

\keywords{extreme points, polydisk, polydisc, bidisk, bidisc,
  holomorphic, analytic, intersection multiplicity, two-torus, 
determinantal representations, transfer functions, bounded analytic
functions,
inner functions, rational inner functions,
stable polynomial,
scattering stable polynomial}

\subjclass[2010]{Primary: 32A10, Secondary 46A55, 47A57}

\begin{document}

\begin{abstract}
We consider the problem
of characterizing the extreme points
of the set of analytic functions $f$ on
the bidisk with positive real part
and $f(0)=1$.  
If one restricts
to those $f$ whose Cayley transform
is a rational inner function, one
gets a more tractable problem.
We construct families of such $f$ 
that are extreme points and conjecture 
that these are all such extreme points.
These extreme points are constructed 
from polynomials dubbed $\T^2$-saturated, which 
roughly speaking means they have no zeros in the bidisk
and as many zeros as possible on the boundary without having
infinitely many zeros. 
\end{abstract}

\maketitle

\tableofcontents

\section{Introduction}

Let $PRP_d$ denote the set of holomorphic functions $f:\D^d \to RHP$ such
that $f(0) = 1$.  Here $\D^d$ is the $d$-dimensional unit polydisk in
$\C^d$ and $RHP$ is the right half plane in $\C$.  

The set $PRP_d$ is a normal family and a convex set. 
Rudin posed the following natural question in his 1970 ICM address \cite{RudinICM}.
\begin{quote}
What are the extreme
points of $PRP_d$?
\end{quote}

A complete answer is known only in the case $d=1$.
 The extreme points
of $PRP_1$ are the functions
\[
z \mapsto \frac{1+\bar{\al} z}{1-\bar{\al}z}
\]
where $\al \in \T$, the unit circle.  Remarkably, this fact
together with the Krein-Milman theorem suffices to prove the Herglotz representation
theorem for elements of $PRP_1$: for every $f\in PRP_1$ there is a unique probability
measure $\mu$ on $\T$ such that
\[
f(z) = \int_{\T} \frac{1+\bar{\zeta}z}{1-\bar{\zeta}z} d\mu(\zeta).
\]
In turn, this fact can be used to prove a variety of spectral theorems.

There is no known characterization of the extreme points of $PRP_d$
for $d>1$ and we would guess there is not a simple characterization.  
Every $f\in PRP_d$ has a Poisson type representation
\[
\text{Re} f(z) = \int_{\T^d} P_{z_1} P_{z_2}\cdots P_{z_d} d\mu
\]
where $P_{w}(\zeta) = \frac{1-|w|^2}{|1-\bar{w}\zeta|^2}$ is the Poisson kernel
and $\mu$ is a probability measure.  However, when $d>1$, $\mu$ cannot 
be an arbitrary probability measure;
it must satisfy additional moment conditions since $f$ is analytic.
Specifically, we must have
$\text{supp}(\hat{\mu}) \subset \Z_{+}^d \cup \Z_{-}^d$ where $\Z_+$ (resp. $\Z_{-}$) denotes
the set of non-negative (resp. non-positive) integers. 
One might expect that extreme points of $PRP_d$ would be represented via
measures with ``small'' support but McDonald has constructed an extreme point
whose boundary measure is absolutely continuous with respect to Lebesgue measure
on $\T^d$ (see \cite{McDonald2}).  Not all of the known results are negative in spirit however.

Forelli has a useful necessary condition for $f \in
PRP_d$ to be an extreme point \cite{Forelli}.  
Let $S_d$ denote
the Schur class of the polydisk; 
the set of holomorphic functions $F:\D^d \to \cd$.  
First, via a Cayley transform 
we may write
\begin{equation} \label{fF}
f = \frac{1+F}{1-F}
\end{equation}
where $F \in S_d$ and $F(0) = 0$.  Forelli has proven that if $F$ is
reducible in the sense that $F = GH$ with non-constant $G,H \in S_d$ with $G(0)=0$
(but with no assumption on $H(0)$), then $f$ is \emph{not} an extreme point of
$PRP_d$.  Forelli's approach actually gives an elementary proof of the
characterization of extreme points for the case $d=1$.

Since the general problem of characterizing extreme points
of $PRP_d$ seems difficult, it seems reasonable
to restrict the problem to a more tractable subclass of
functions.  We shall examine the class
\[
PRPR_d = \{ f \in PRP_d: F=\frac{f-1}{f+1} \text{ is a rational inner function} \}.
\]
\emph{Inner} means
 for almost every $z\in \T^d$, $|F(rz)|\to 1$ as $r\nearrow 1$.
This implies $\Re f(rz) \to 0$ at least away from $z\in \T^d$ such that 
$F(rz) \to 1$.  This class, while very special, is
also dense in $PRP_d$ using the topology of 
local uniform convergence.  (Rudin \cite{FTIP}
proves that rational inner functions are
dense in $S_d$ and it then follows that $PRPR_d$ is
dense in $PRP_d$.) Thus, we shall modify our
opening question from Rudin.

\begin{quote}
Which $f\in PRPR_d$ are extreme points of $PRP_d$?
\end{quote}

It is proven in Rudin \cite{FTIP} that rational inner functions are
of the form
\[
F(z) = \frac{\tilde{p}(z)}{p(z)}
\]
where $p \in \C[z_1,\dots,z_d]$ 
\begin{enumerate}
\item has no zeros in $\D^d$, 
\item multidegree at most $n=(n_1,\dots,n_d)$ 
(the degree in $z_j$ is at most $n_j$), and
\item the reflection of $p$ given by
\[
\tilde{p}(z) = {z_1}^{n_1} \cdots {z_d}^{n_d} \overline{p(1/\bar{z}_1, \dots, 1/\bar{z}_d)}
\]
has no factors in common with $p$.
\end{enumerate}

We say multidegree ``at most'' above to allow for $\tilde{p}$ to have a monomial factor.  
Since $p(0)\ne 0$, we get that $\tilde{p}$ has multidegree exactly $n$.
For this reason it often makes more sense to refer to the multidegree of $\tilde{p}$.  

\begin{definition}
We say $p\in \C[z_1,\dots,z_d]$ is \emph{scattering stable} if it satisfies conditions
(1)-(3) above for $n$ equal to the multidegree of $p$.
\end{definition}

 Some simple
examples of rational inner functions are
\begin{equation} \label{schurexamples}
G_1 = \frac{3z_1z_2-z_1-z_2}{3-z_1-z_2} \qquad G_2=\frac{2z_1z_2-z_1-z_1}{2-z_1-z_2}
\end{equation}
and corresponding elements of $PRP_2$ are
\begin{equation} \label{prpexamples}
g_1 = \frac{3-2(z_1+z_2) + 3z_1z_2}{3(1-z_1z_2)} \qquad g_2= \frac{(1-z_1)(1-z_2)}{1-z_1z_2}.
\end{equation}

One way to analyze a convex set is to examine its \emph{faces}.
Given an element $x$ of a convex set $S$, 
its face $\mcF(x)$ is the union of all
line segments $L$ in $S$ with $x$ in the interior of $L$.
We set $\mcF(x) = \{x\}$ when $x$ is an extreme point.
McDonald has characterized the faces of $f\in PRPR_d$ as
elements of the convex set $PRP_d$.

\begin{theorem}[McDonald \cite{McDonald1}] \label{McDonald}
Let $f\in PRPR_d$ which we write as 
$f = \frac{1+F}{1-F} = \frac{p+\tilde{p}}{p-\tilde{p}}$ where $F = \frac{\tilde{p}}{p}$ is 
a rational inner function, written as above with $\tilde{p}(0)=0$ and
multidegree $n$.  Then, 
every element of the face $\mcF(f)$ of $f$ in $PRP_d$ is
of the form
\[
\frac{p+\tilde{p}+2v}{p-\tilde{p}}
\]
where $v \in \C[z_1,\dots,z_d]$ has multidegree at most $n$,
satisfies $v=\tilde{v}$, $v(0)=0$, and $p+v$ and $p-a v$ have no zeros
in $\D^d$ for some $a>0$.  The face $\mcF(f)$ has real dimension at most
$\prod (n_j+1) - 2$.  
\end{theorem}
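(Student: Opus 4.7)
The plan is to reformulate face membership as a pair of positivity conditions, extract a polynomial structure from the boundary symmetry of $f$ via a Fourier argument, and then verify the stability and dimension claims.

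An element $x\in PRP_d$ lies in $\mcF(f)$ if and only if there exists $a>0$ with $y:=(1+a)f-ax$ also in $PRP_d$, because such an $a$ produces the convex combination $f=\tfrac{a}{1+a}x+\tfrac{1}{1+a}y$ with $f$ strictly between $x$ and $y$. Passing to Herglotz representing measures, $\mu_f=\tfrac{a}{1+a}\mu_x+\tfrac{1}{1+a}\mu_y$, so $\mu_x\le\tfrac{1+a}{a}\mu_f$. Because $\Re f=(|p|^2-|\tilde p|^2)/|p-\tilde p|^2$ vanishes a.e.\ on $\T^d$, the measure $\mu_f$, and hence $\mu_x$, is singular and supported on the variety $V=\{z\in\T^d:p(z)=\tilde p(z)\}$. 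In particular $\Re x$ has vanishing a.e.\ boundary values, so $\overline{x}=-x$ a.e.\ on $\T^d$.

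The crux is to consider $h(z):=x(z)(p(z)-\tilde p(z))-(p(z)+\tilde p(z))$, holomorphic on $\D^d$ with $h(0)=0$. Using the reflection identities $\overline{p(z)}=z^{-n}\tilde p(z)$ and $\overline{\tilde p(z)}=z^{-n}p(z)$ valid on $\T^d$, together with $\overline{x}=-x$ a.e., one computes $\overline{h(z)}=z^{-n}h(z)$ a.e.\ on $\T^d$. Granting that $h$ lies in $H^1(\D^d)$ --- the technical heart, since multiplying $x$ by the polynomial $p-\tilde p$ that vanishes on $V$ should tame the singular concentration of $\mu_x$ there --- the identity $z^n\overline{h}=h$ translates at the level of Fourier coefficients into $\hat h(\alpha)=\overline{\hat h(n-\alpha)}$, with $\hat h(\alpha)=0$ unless $0\le\alpha\le n$. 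Thus $h$ is a polynomial of multidegree at most $n$ with $h=\tilde h$, and setting $v:=h/2$ produces a polynomial with $v=\tilde v$, $v(0)=0$, multidegree $\le n$, and $x=(p+\tilde p+2v)/(p-\tilde p)$.

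With $q:=p+v$ one has $\tilde q=\tilde p+v$ since $\tilde v=v$, and a direct algebraic check gives $x=(1+\tilde q/q)/(1-\tilde q/q)$; likewise $y=(1+\tilde r/r)/(1-\tilde r/r)$ with $r:=p-av$. Since $|\tilde q|=|q|$ on $\T^d$ automatically, the condition $x,y\in PRP_d$ reduces exactly to $q$ and $r$ having no zeros in $\D^d$, completing the characterization. For the dimension bound, the real vector space of polynomials of multidegree $\le n$ satisfying $v=\tilde v$ has real dimension $\prod(n_j+1)$ --- the antilinear involution $v\mapsto\tilde v$ pairs the coefficient $c_\alpha$ with $\overline{c_{n-\alpha}}$ --- and the constraint $v(0)=0$ forces both $c_0=0$ and $c_n=\overline{c_0}=0$, removing two real dimensions and leaving $\prod(n_j+1)-2$; injectivity of the map $v\mapsto(p+\tilde p+2v)/(p-\tilde p)$ then transfers this bound to $\mcF(f)$. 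The principal obstacle is the Hardy-class regularity of $h$: one must quantitatively balance the domination $\mu_x\le C\mu_f$ against the vanishing of $p-\tilde p$ on $V$ to secure integrability of $x(p-\tilde p)$ on $\T^d$, so that the Fourier argument applies.
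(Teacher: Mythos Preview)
Your global Fourier strategy is different from the paper's proof, which (like McDonald's original) proceeds through one-variable diagonal slices $f_\zeta(z)=f(z\zeta)$, $\zeta\in\T^d$. In one variable the paper uses only Schwarz reflection and an elementary analysis of singularities on $\T$ to show each slice $h_\zeta(z):=(x(z\zeta))(p_\zeta(z)-\widetilde{p_\zeta}(z))-(p_\zeta(z)+\widetilde{p_\zeta}(z))$ is a polynomial of degree at most $|n|$ with $h_\zeta(z)=\zeta^n z^{|n|}\overline{h_\zeta(1/\bar z)}$; then a power-series lemma (Lemma~\ref{slices}) promotes this slice information to the conclusion that $h$ is a polynomial of multidegree at most $n$ with $h=\tilde h$. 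No $H^1$ membership is ever needed: the lemma works directly with the Taylor coefficients of the holomorphic function $h$ on $\D^d$.

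The gap you flag is genuine and is exactly the step the slice argument is designed to circumvent. In one variable, $\mu_x\le C\mu_f$ forces $\mu_x$ to be a finite sum of point masses at the (simple) zeros of $p-\tilde p$ on $\T$, so $x$ is rational with simple poles there and $x(p-\tilde p)$ is automatically a polynomial. In $d\ge 2$ the support of $\mu_f$ is a positive-dimensional real variety in $\T^d$, and domination $\mu_x\le C\mu_f$ gives no direct handle on the pointwise or $L^1$ size of $x$ near that variety; there is no several-variable analogue of ``simple pole times simple zero is removable'' to invoke. Absent an independent proof that $x(p-\tilde p)\in H^1(\D^d)$, your boundary identity $z^n\overline h=h$ is only an a.e.\ relation between radial limits and does not by itself pin down the Fourier support. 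The cleanest repair is to adopt the slice reduction: prove the one-variable case as you essentially do (your Herglotz domination argument there is fine), and then replace the global $H^1$/Fourier step by the elementary power-series Lemma~\ref{slices}. Your derivation of $q=p+v$ nonvanishing on $\D^d$ from $x\in PRP_d$, and the dimension count, are correct.
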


The condition with $a$ is just to make $f$
in the interior of an appropriate line segment.
This theorem does not solve our problem because the
condition that $p+v$ and $p-av$ have no zeros in $\D^d$ 
is difficult to work with. Nevertheless, the theorem
certainly provides a useful reduction.  Indeed,
it shows that $\mcF(f)$ does not depend
on the choice of underlying convex set ($PRP_d$ versus $PRPR_d$).

The original proof of McDonald involves one variable slices and the Herglotz representation theorem.
We shall give a very similar proof that uses only complex analysis (i.e. no measure theory).
The number $\prod(n_j+1) - 2$ simply counts the real-linear dimension 
of the set of $v$ of multidegree at most $n$ satisfying $v(0)=0$ and $v=\tilde{v}$.
In particular, if $p$ has no zeros in $\cd^d$, then
this dimension is exactly $\prod(n_j+1)-2$ 
since $p\pm tv$ will have no zeros in $\cd^d$ for $|t|$ small enough.
Thus, if $p$ has no zeros in $\cd^d$ then $f=(p+\tilde{p})/(p-\tilde{p})$
is never an extreme point unless $p$ depends on only one variable.

For example, the function $g_1$ in \eqref{prpexamples} is not extreme and we can perturb
$p_1=3-z_1-z_2$ using any $v=az_1 + \bar{a}z_2$ with small enough $a\in \C$.
In fact, $p_1+az_1+\bar{a} z_2$ has no zeros in $\D^2$ if and only if $|1-a|\leq 3/2$.

We have been able to prove the following add-on to McDonald's theorem, which
also says that the dimension of $\mcF(f)$ is maximal
if and only if $p$ has no zeros in $\cd^d$.

\begin{theorem}\label{addon}
Assume the setup of Theorem \ref{McDonald}.  
If $p$ vanishes at $\zeta \in \T^d$ to order $M$, then
$v$ vanishes at least to order $M$ at $\zeta$.
\end{theorem}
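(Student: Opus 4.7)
The plan is to argue by contradiction: suppose $v$ vanishes at $\zeta$ to order $L < M$ and reach an impossibility via one-variable Herglotz theory along a well-chosen slice, exploiting the fact (from McDonald's theorem) that both $v$ and $-av$ are admissible face parameters.

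First I would pick a one-variable slice of the form $\gamma_m(\lambda) = (\zeta_1 \lambda^{m_1}, \ldots, \zeta_d \lambda^{m_d})$ for $m \in \Z_{>0}^d$, chosen so that the nonzero homogeneous lowest-order parts $P_M$ and $V_L$ of the Taylor expansions of $p$ and $v$ at $\zeta$ satisfy $P_M(m\zeta) V_L(m\zeta) \ne 0$; this is possible since $P_M V_L$ is a nonzero polynomial and $\Z_{>0}^d$ is Zariski-dense in $\C^d$. Along this slice $p\circ\gamma_m$ vanishes exactly to order $M$ at $\lambda = 1$ with leading coefficient $c := P_M(m\zeta)$, and using $\tilde p(z) = z^n \overline{p(1/\bar z)}$ together with $1/\bar\zeta_j = \zeta_j$ one computes that $\tilde p\circ\gamma_m$ also vanishes exactly to order $M$, with leading coefficient $\epsilon\bar c$ where $\epsilon := (-1)^M \zeta^n$. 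The function $g_m := f_v\circ\gamma_m$ lies in $PRP_1$, and for $L < M$ its numerator has order $L$ at $\lambda = 1$ while its denominator has order $\ge M$, producing a boundary pole. Because one-variable functions with positive real part admit only simple boundary poles (a pole of order $\ge 2$ forces $\mathrm{Re}(g_m)$ negative in an open sector), this first gives $L \ge M - 1$.

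For the remaining case $L = M - 1$, the pole-order bound forces the denominator to have order exactly $M$, i.e.\ $d := c - \epsilon\bar c \ne 0$, and $g_m$ has a simple pole at $\lambda = 1$ with residue $2u/d$, where $u := V_{M-1}(m\zeta)$. The self-reflective condition $v = \tilde v$, expanded near $\zeta$, yields the key identity $u + \epsilon\bar u = 0$, and a parallel calculation gives $d + \epsilon\bar d = 0$; thus both $u$ and $d$ lie on the same real line $\ell := \{w\in\C : w + \epsilon\bar w = 0\}$, making $u/d \in \R$. The Herglotz representation then forces the residue of any simple boundary pole of a $PRP_1$ function to be a non-positive real number, so $u/d \le 0$. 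Because $-av$ satisfies McDonald's hypotheses exactly as $v$ does, applying the identical slice analysis to the face element $f_{-av}$ produces residue $-2au/d$ and hence $u/d \ge 0$. Combining, $u/d = 0$, so $V_{M-1}(m\zeta) = 0$, contradicting the choice of $m$; therefore $L \ge M$.

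The main obstacle is the residue computation: it requires carefully tracking the leading behavior of $p$, $\tilde p$, and $v$ along the slice at $\lambda = 1$, checking via the self-reflective condition $v = \tilde v$ that $u$ and $d$ lie on a common real line of $\C$ (so that the sign inequality from Herglotz actually has content), and then recognizing that pairing the Herglotz bounds from $v$ and $-av$ is precisely what upgrades the Herglotz inequality into the equality $u/d = 0$ that forces the leading part $V_{M-1}$ to vanish.
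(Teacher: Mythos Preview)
Your argument is correct and takes a genuinely different route from the paper. The paper restricts to the single diagonal slice $\lambda\mapsto\lambda\zeta$ and invokes an external structural fact (Theorem~14.1 of \cite{GK}: the lowest homogeneous term $P_M$ of a polynomial with no zeros in $\D^d$ has no zeros in $RHP^d$) to conclude that the order of vanishing of $p$ at $\zeta$ coincides with that of the sliced polynomial $p_\zeta$ at $1$; it then uses the gcd observation from the one-variable McDonald analysis ($\gcd(p_\zeta,\widetilde{p_\zeta})$ divides $v_\zeta$) and reapplies the cited theorem to $p+v$ to lift the bound back to $\zeta$. Your approach dispenses with the appeal to \cite{GK}: the family of monomial slices $\gamma_m$ together with Zariski density of $\Z_{>0}^d$ lets you force $P_M(m\zeta)\ne 0$ and $V_L(m\zeta)\ne 0$ simultaneously, and the Herglotz simple-pole bound plus the residue-sign argument (played off between $v$ and $-av$) replaces the gcd step. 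The trade-off is that the paper's proof is shorter once the cited lemma is available, while yours is self-contained but requires the reflection computations $u+\epsilon\bar u=0$, $d+\epsilon\bar d=0$ to make the sign inequality meaningful. One harmless slip: with the standard convention $\text{Res}_{1}g_m=\lim_{\lambda\to 1}(\lambda-1)g_m(\lambda)$ the residue is $-2u/d$, so Herglotz gives $u/d\ge 0$ rather than $\le 0$; since the argument is symmetric in $v\leftrightarrow -av$ this does not affect the contradiction.
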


We can also say something about the homogeneous expansion
of $v$.  See Theorem \ref{vhomo} for details.

Unfortunately, Theorem \ref{addon} has limitations
because order of vanishing is a crude way
to count zeros in this setting.
Consider the example $g_2$ above.  
Since $p_2=2-z_1-z_2$ has a zero at $(1,1)$,
so must any valid perturbation $v=az_1+\bar{a}z_2$.
This implies $a=ir$ for some $r\in \R$.  
It turns out $g_2$ is actually 
an extreme point, but it takes additional
work to prove this (i.e. to show $r=0$).

Our main goal is to prove a theorem which describes a number of extreme points
of $PRP_d$ coming from $PRPR_d$
 in the case $d=2$.
The denominators of the $F$'s appearing in these extreme points have a special property.

\begin{definition}
Let $p\in \C[z_1,z_2]$ be scattering stable and of bidegree $(n_1,n_2)$. 
We say  $p$ is \emph{$\T^2$-saturated} if $p$ and $\tilde{p}$ have $2n_1 n_2$ common zeros on $\T^2$.  
\end{definition}

The common zeros of $p$ and $\tilde{p}$ are
counted with multiplicities as in B\'ezout's theorem. 
The number $2n_1n_2$ counts all of the common zeros
of $p$ and $\tilde{p}$ on $\C_{\infty}\times \C_{\infty}$.
See \cite{Fischer, Fulton} for more details about intersection
multiplicities.  

The class of $\T^2$-saturated polynomials is interesting in its own right.
In the paper \cite{GK}, we gave two related characterizations of $\T^2$-saturated polynomials.
One relates to a sums-of-squares formula for
scattering stable polynomials. 
If $p\in \C[z_1,z_2]$ 
is scattering stable of bidegree $(n_1,n_2)$ then
\begin{equation} \label{sos}
|p|^2-|\tilde{p}|^2 = (1-|z_1|^2)SOS_1 + (1-|z_2|^2)SOS_2
\end{equation}
where $SOS_j$ is sum of squared moduli of $n_j$ polynomials.
A scattering stable polynomial $p$ is $\T^2$-saturated if and only if the 
sums of squares terms $SOS_1,SOS_2$ are unique (see Corollary 13.6 of \cite{GK}).  

A second characterization says that
a scattering stable polynomial $p$ is $\T^2$-saturated if and only if
\[
\int_{\T^2} \left|\frac{f}{p}\right|^2 d\sigma =\infty 
\]
for all nonzero $f\in \C[z_1,z_2]$ satisfying $\deg_j f < \deg_j p$ for  $j=1,2$ (see
Corollary 6.5 of \cite{GK}).
Thus, $\T^2$-saturated polynomials have so many boundary zeros
that lower degree polynomials cannot match them (in the above sense).

A simple example of a $\T^2$-saturated polynomial is $p=2-z_1-z_2$.
Note $p$ and $\tilde{p}$ share a single zero $(1,1)$ on $\T^2$ but
it has multiplicity $2$. This can be computed using the resultant
of $p$ and $\tilde{p}$ with respect to $z_2$.

Our main theorem \emph{suggests} a third characterization of $\T^2$-saturated polynomials
and constructs a family of extreme points in $PRP_2$.

\begin{theorem} \label{thmsaturated} 
Let $p \in \C[z_1,z_2]$ be scattering stable and $\tilde{p}(0,0)=0$.
Let $f=\frac{p+\tilde{p}}{p-\tilde{p}}$.  
If $p$ is $\T^2$-saturated and $p-\tilde{p}$ is irreducible,
then $f$ is an extreme point of $PRP_2$.
\end{theorem}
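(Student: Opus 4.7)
I begin by invoking Theorem \ref{McDonald} to reduce the problem: it suffices to show that for any polynomial $v$ of bidegree at most $n = (n_1, n_2)$ satisfying $v = \tilde v$, $v(0) = 0$, and such that both $p + v$ and $p - av$ have no zeros in $\D^2$ for some $a > 0$, one must have $v \equiv 0$. Fix such a $v$; the aim is to conclude $v = 0$.

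The strategy is to show that $v$ is divisible as a polynomial by $p - \tilde p$. Once this divisibility is established, the bidegree constraint forces $v = c(p - \tilde p)$ for a scalar $c$; the identity $\widetilde{p - \tilde p} = -(p - \tilde p)$ together with $v = \tilde v$ forces $c = -\bar c$, so $c$ is purely imaginary; and $v(0) = 0$ combined with $\tilde p(0) = 0$ and $p(0) \ne 0$ forces $c = 0$.

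To force divisibility I would first use Theorem \ref{addon} (together with its symmetric counterpart for $\tilde p$, which is also scattering stable and satisfies $m_p(\zeta) = m_{\tilde p}(\zeta)$ for every $\zeta \in \T^2$) and the finer information of Theorem \ref{vhomo} on the leading homogeneous part of $v$ at each common zero $\zeta$ of $p$ and $\tilde p$ on $\T^2$. The intended endpoint is to show that $v$ and $p - \tilde p$ have total intersection multiplicity on $\C_{\infty} \times \C_{\infty}$ strictly greater than $2 n_1 n_2 = \sum_\zeta \mu_\zeta(p, \tilde p)$ (the B\'ezout-maximal common zero count guaranteed by $\T^2$-saturation). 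By B\'ezout's theorem, strict excess forces $v$ and $p - \tilde p$ to share a common polynomial factor, which must be $p - \tilde p$ itself by irreducibility.

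The main obstacle is extracting enough intersection multiplicity of $v$ with $p - \tilde p$ at the common zeros: vanishing-order data from Theorem \ref{addon} alone only gives $\mu_\zeta(v, p - \tilde p) \geq m_p(\zeta)^2$, which is in general \emph{smaller} than $\mu_\zeta(p, \tilde p)$ and so cannot push us past the B\'ezout bound. For instance, with $p = 2 - z_1 - z_2$ the candidate $v = ir(z_1 - z_2)$ satisfies $\mu_{(1,1)}(v, p - \tilde p) = 1 < 2 = \mu_{(1,1)}(p, \tilde p)$. To close this gap I expect to exploit the two-sided stability condition via the one-parameter family $p_t = p + tv$ for $t \in [-a, 1]$: each $p_t$ should be scattering stable, each should be $\T^2$-saturated because $v$ vanishes on the common boundary zero set, and $p_t - \tilde p_t = p - \tilde p$ is invariant along the family. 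Combining the uniqueness of the SOS decomposition of $|p_t|^2 - |\tilde p_t|^2$ (Corollary 13.6 of \cite{GK}) with the irreducibility of $p - \tilde p$ should provide the rigidity needed to either push the intersection count above the B\'ezout bound or to conclude $v \equiv 0$ by a direct deformation argument.
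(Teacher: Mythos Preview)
Your reduction via Theorem \ref{McDonald} is correct, and the endgame---once you know $v$ is a scalar multiple of $p-\tilde p$, the constraints $v=\tilde v$ and $v(0)=0$ force $v=0$---is fine. But the heart of the argument has a gap that you yourself flag and do not close. The B\'ezout route, fed only by the order-of-vanishing data from Theorems \ref{addon} and \ref{vhomo}, does not produce enough intersection multiplicity between $v$ and $p-\tilde p$: your own example $p=2-z_1-z_2$, $v=ir(z_1-z_2)$ already shows the local count falls short of $I_\zeta(p,\tilde p)$. No purely local refinement will fix this, because what ultimately pins $v$ down is a global constraint, not pointwise tangency data. Your fallback---run the deformation $p_t=p+tv$, assert each $p_t$ is scattering stable and saturated, and invoke SOS uniqueness---is a hope, not an argument. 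In particular the claim that every $p_t$ is scattering stable is unjustified and in fact false in general: the possibility that $p+tv$ and $\tilde p+tv$ acquire a common factor for some special $t$ is precisely the dichotomy the proof must confront.

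The paper's proof takes a different route. It encodes $p,\tilde p,v$ in Schur--Cohn matrices $P,Q,V$ and uses the matrix Fej\'er--Riesz lemma to factor $P^*P-Q^*Q=E^*E$ and $(P\pm V)^*(P\pm V)-(Q\pm V)^*(Q\pm V)=E_\pm^*E_\pm$. Saturation is exactly the statement that $\det E$ has all $nm$ of its zeros on $\T$, so $E$ is invertible off $\T$; this forces $\Phi_\pm:=E_\pm E^{-1}$ to be analytic on the whole Riemann sphere, hence constant. From $4\,\mathrm{Re}\bigl((P-Q)^*V\bigr)=E^*(\Phi_-^*\Phi_--\Phi_+^*\Phi_+)E$ one gets a clean dichotomy: either $\Phi_-^*\Phi_-\ne\Phi_+^*\Phi_+$, in which case some $p+tv$ and $\tilde p+tv$ share a nontrivial factor and hence $p-\tilde p=(p+tv)-(\tilde p+tv)$ is reducible; or $\Phi_-^*\Phi_-=\Phi_+^*\Phi_+$, which unwinds to the functional equation $(p-\tilde p)(z,\eta)\,v(z,w)=(p-\tilde p)(z,w)\,v(z,\eta)$, and irreducibility of $p-\tilde p$ then forces $(p-\tilde p)\mid v$, hence $v=0$. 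So the divisibility you were aiming for does appear, but only at the end of the second branch, and it is reached via matrix factorization and a polarization identity rather than via B\'ezout counting.
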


For example, one can conclude the example $g_2$ is an extreme point
simply by noticing $p=2-z_1-z_2$ is $\T^2$-saturated 
and $p-\tilde{p} = 2(1-z_1z_2)$ is irreducible. 
On the other hand, $p$ being $\T^2$-saturated is not
sufficient for $f$ to be extreme as the example
\[
\frac{1-G_2}{1+G_2} = \frac{(1-z_1z_2)}{(1-z_1)(1-z_2)}
 = \frac{1}{2}\left(\frac{1+z_1}{1-z_1} + \frac{1+z_2}{1-z_2}\right)
\]
shows. 

Along the path to proving Theorem \ref{thmsaturated}
we have been able to prove a more refined version
of Theorem \ref{addon}.
Given two polynomials $p,q\in \C[z_1,z_2]$ with a common
zero $\zeta$, we let $I_{\zeta}(p,q)$ denote the intersection
multiplicity of $p,q$ at $\zeta$.  

\begin{theorem} \label{thmmults}
Let $f=\frac{p+\tilde{p}}{p-\tilde{p}} \in PRPR_2$ where
$p\in \C[z_1,z_2]$ is scattering stable.
Suppose 
\[
g = f+ 2\frac{v}{p-\tilde{p}}
\]
belongs to the face of $f$ as in Theorem \ref{McDonald}.
Suppose $p(\zeta)=0$ for some $\zeta\in \T^2$.  Then,
\[
I_{\zeta}(p,\tilde{p}) \leq I_{\zeta} (p+v,\tilde{p}+v).
\]
\end{theorem}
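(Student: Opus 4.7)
The plan is to reduce the intersection-multiplicity inequality at $\zeta$ to an order-of-vanishing comparison along the irreducible branches of the germ of $p-\tilde p$ at $\zeta$, and then extract that comparison from the positivity of $g$.

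First, observe the ideal identities
\[
(p,\tilde p) = (p-\tilde p,\, p+\tilde p), \qquad (p+v,\tilde p+v) = (p-\tilde p,\, p+\tilde p + 2v)
\]
in $\mathcal{O}_\zeta$, so that both intersection multiplicities take the form $\dim_\C \mathcal{O}_\zeta/(p-\tilde p,\,\cdot)$. (If $p+v$ and $\tilde p+v$ share a branch at $\zeta$ then $I_\zeta(p+v,\tilde p+v)=\infty$ and the inequality is trivial, so assume not.) Factor the germ at $\zeta$ as $p-\tilde p = \prod_k \phi_k^{e_k}$ with $\phi_k$ distinct irreducibles in $\mathcal{O}_\zeta$. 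By additivity of intersection multiplicity in the first argument, $I_\zeta(p-\tilde p, q) = \sum_k e_k \operatorname{ord}_{\phi_k}(q)$, and since $p = \tilde p$ on each germ $\{\phi_k=0\}$, $p+\tilde p$ and $p+\tilde p+2v$ restrict there to $2p$ and $2(p+v)$ respectively. The desired inequality thus reduces to the branchwise statement
\[
\operatorname{ord}_{\phi_k}(p+v) \geq \operatorname{ord}_{\phi_k}(p) \quad \text{for every } \phi_k,
\]
which by the ultrametric property is equivalent to $\operatorname{ord}_{\phi_k}(v) \geq \operatorname{ord}_{\phi_k}(p)$.

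To establish this last inequality I would use that $g \in PRP_2$ is equivalent to $|p+v|^2 \geq |\tilde p+v|^2$ on $\overline{\D^2}$; subtracting $|p|^2-|\tilde p|^2$ and applying the sum-of-squares identity \eqref{sos} for $p$ yields
\[
(1-|z_1|^2)\,S_1 + (1-|z_2|^2)\,S_2 + 2\operatorname{Re}\bigl((p-\tilde p)\bar v\bigr) \geq 0 \text{ on } \overline{\D^2},
\]
where $S_1,S_2$ are the sums of squared moduli determined by $p$. Near $\zeta$ the SOS terms vanish on $\T^2$ but bound the negative part of the cross term $2\operatorname{Re}((p-\tilde p)\bar v)$. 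Parametrizing the branch $\phi_k$ by a Puiseux map $\gamma:(\D_r,0)\to(\C^2,\zeta)$ and probing $\overline{\D^2}$ in directions normal to $\gamma$, a careful Taylor expansion together with the reflection symmetry $v=\tilde v$ (which, as in Theorem \ref{addon} and its refinement about homogeneous expansions, rigidly fixes the leading form of $v$ at $\zeta$) forces $\operatorname{ord}_{\phi_k}(v) \geq \operatorname{ord}_{\phi_k}(p)$.

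The main obstacle is precisely this last analytic step. If a branch $\phi_k$ extended into the open bidisk one could simply restrict $g$ to a 1-variable holomorphic slice in $\D^2$ and apply a classical Julia/Herglotz estimate. However, branches of $p-\tilde p$ at $\zeta \in \T^2$ frequently touch $\overline{\D^2}$ only along $\T^2$ (for instance, the branch $1 - z_1z_2 = 0$ of $p = 2 - z_1 - z_2$ meets $\overline{\D^2}$ only on the circle $z_2 = \bar z_1$), so the 1-D slice method is unavailable and the order estimate must instead be extracted from the two-variable SOS inequality above by a direct bookkeeping of the order of each term along the branch. Carrying this out carefully, while exploiting $v=\tilde v$ to eliminate the possibility of sign-cancellation in the leading contribution, is where the bulk of the work lies.
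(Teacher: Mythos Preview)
Your algebraic reduction is correct and attractive: the ideal identities $(p,\tilde p)=(p-\tilde p,p+\tilde p)$ and $(p+v,\tilde p+v)=(p-\tilde p,p+\tilde p+2v)$ do let you split both intersection numbers along the local irreducible factors $\phi_k$ of $p-\tilde p$ at $\zeta$, and the problem does reduce to the branchwise inequality $\operatorname{ord}_{\phi_k}(v)\ge\operatorname{ord}_{\phi_k}(p)$. But you have not proved that inequality; you say yourself that ``the bulk of the work'' lies in a Puiseux/normal-direction expansion that you only describe in words. That missing step \emph{is} the theorem. The SOS inequality you wrote collapses along each branch (since $p-\tilde p=0$ there), and you offer no concrete mechanism for probing normal directions into $\overline{\D^2}$ when, as you correctly observe, the branch may meet $\overline{\D^2}$ only in $\T^2$. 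Invoking Theorem~\ref{addon} and the homogeneous-term refinement does not help: those results control only the \emph{lowest} homogeneous part of $v$ at $\zeta$, which is strictly weaker than a full branch-order bound (a single boundary zero can support several branches of $p-\tilde p$ with different tangents and different orders). As it stands this is an outline, not a proof.

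The paper's argument is entirely different and sidesteps your obstacle. It never decomposes along branches of $p-\tilde p$. Instead it forms the Schur--Cohn matrices $P,Q,V$ of $p,\tilde p,v$ with respect to $w$, uses the matrix Fej\'er--Riesz lemma to factor $(P\pm V)^*(P\pm V)-(Q\pm V)^*(Q\pm V)=E_\pm^*E_\pm$ and $P^*P-Q^*Q=E^*E$ on $\T$, and observes that averaging the $\pm$ equations forces $\Phi_\pm:=E_\pm E^{-1}$ to be bounded, hence holomorphic, on $\overline\D$. Since the resultant in $w$ obeys $r(z)=z^{nm}|\det E|^2$ on $\T$, this yields $r_\pm(z)=r(z)\,|\det\Phi_\pm(z)|^2$, so the order of vanishing of $r_\pm$ at every $z_0\in\T$ dominates that of $r$ (Proposition~\ref{proplines}). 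This is a \emph{line-by-line} inequality, and the paper upgrades it to the pointwise one by the bidisk-preserving substitution $(z,w)\mapsto(w^kz,w)$, a local-ring automorphism at $\mathbf{1}$ which for suitable $k$ isolates $\mathbf{1}$ as the unique common zero on its vertical line. Your branch decomposition trades that global Fej\'er--Riesz/resultant machinery for a local analytic estimate you have not supplied.
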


We have been unable to prove the following.

\begin{conjecture} \label{conjecture}
If $p\in \C[z_1,z_2]$ is scattering stable, $\tilde{p}(0,0)=0$, and $f=\frac{p+\tilde{p}}{p-\tilde{p}}$ is
an extreme point of $PRP_2$, then $p$ is $\T^2$-saturated and $p-\tilde{p}$ is irreducible.
\end{conjecture}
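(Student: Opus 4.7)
The plan is to prove the contrapositive in two separate cases: assume that $f$ is extreme and deduce both that $p-\tilde{p}$ is irreducible and that $p$ is $\T^2$-saturated. Equivalently, I will show that if either hypothesis fails then there exists a non-trivial perturbation $v$ satisfying the conditions of Theorem \ref{McDonald}, contradicting extremality.

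\emph{Reducibility case.} Suppose $p-\tilde{p}=AB$ with non-constant $A,B \in \C[z_1,z_2]$. The key structural fact is the involutive symmetry $(p-\tilde{p})^{\sim} = -(p-\tilde{p})$ (up to a monomial factor determined by the multidegree), which forces a reflection symmetry on the factorization: either $\tilde{A}=cA$, $\tilde{B}=-c^{-1}B$ for some unimodular $c$, or $\tilde{A}$ and $B$ are proportional (and similarly for $\tilde{B},A$). I would attempt to construct a valid perturbation $v$ as a self-reflexive combination built from $A,B,\tilde{A},\tilde{B}$ (for instance $v = A\tilde{A} - B\tilde{B}$ suitably normalized so that $v(0)=0$ and $v=\tilde{v}$). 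Verifying the zero-free condition $p\pm tv$ has no zeros in $\D^2$ for small $t$ should follow from the factorization together with Forelli's philosophy that reducible inner functions give rise to reducible perturbation directions.

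\emph{Non-saturation case.} Suppose the total intersection multiplicity of $p,\tilde{p}$ on $\T^2$ is strictly less than $2n_1n_2$. I would exploit the two characterizations of saturation from \cite{GK}. The SOS characterization (equation \eqref{sos}) says that non-saturation is equivalent to non-uniqueness of the decomposition $|p|^2-|\tilde{p}|^2 = (1-|z_1|^2)SOS_1 + (1-|z_2|^2)SOS_2$. The plan is to realize this non-uniqueness as a one-parameter family of scattering stable polynomials $p_t$ with $\widetilde{p_t}/p_t$ of the same multidegree and vanishing at the origin, then set $v := \dot{p}_0$ (reflexified and normalized) and verify it lies in the face. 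Alternatively, the $L^2$-characterization produces a nonzero $q$ of strictly lower multidegree with $q/p \in L^2(\T^2)$; one can try $v = q\bar{p} + \widetilde{q\bar p}$ or a similar construction to convert this extra boundary slack into an honest perturbation.

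\emph{Main obstacle.} The serious difficulty—the one that presumably explains why the author left this as a conjecture—is bridging the algebraic/combinatorial content of saturation and irreducibility to the analytic/positivity requirement that $p\pm av$ be scattering stable. Theorems \ref{addon} and \ref{thmmults} provide only \emph{necessary} zero-order conditions on $v$; a dimension count using these conditions alone does not match $2n_1n_2$ precisely against $(n_1+1)(n_2+1)-2$ after imposing $v=\tilde{v}$ and $v(0)=0$, so even the linear algebra is not obviously tight. Showing that non-saturation produces an actual perturbation (rather than a merely formal direction satisfying Theorem \ref{thmmults}) appears to require a genuinely new construction, most likely via a deformation argument inside the space of scattering stable polynomials guided by the SOS formula, and controlling the zero set of $p+tv$ on $\cbi$ uniformly in $t$ is the crucial technical step.
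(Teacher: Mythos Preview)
The statement is a \emph{conjecture}, not a theorem: the paper explicitly says ``We have been unable to prove the following'' and offers no proof. So there is nothing to compare your proposal against in the sense the task envisions---rather, your proposal is an outline of a possible attack on an open problem, and you yourself flag the main obstacle in your final paragraph.

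That said, the paper has already carried out part of your non-saturation plan and hit exactly the wall you anticipate. Theorem~\ref{intthm} (proved in the ``Integrability of perturbations'' section) shows that if $p$ is not $\T^2$-saturated then there exists a nonzero $v$ with $v=\tilde v$, $v(0,0)=0$, $\deg v\le\deg\tilde p$, and $v/p\in L^2(\T^2)$; the construction is essentially your $L^2$-characterization idea (take $f\in K_p$ nonzero and set $v=wf+z\tilde f$). Immediately after stating the theorem the paper writes: ``Of course the key thing missing in the last statement is $p\pm tv$ have no zeros in $\D^2$ for some small $t>0$.'' So your alternative route via the $L^2$ slack is not new, and the gap you identify---passing from a formal direction to an honest face direction with the stability condition---is precisely the gap the author could not close.

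For the reducibility case, the paper proves nothing in general; it only handles bidegree $(1,1)$ (Theorem~\ref{oneone}) by a direct computation, and even there the argument does not proceed by building a $v$ from the factors of $p-\tilde p$ but by normalizing and exhibiting $f$ explicitly as a convex combination. Your proposed construction $v=A\tilde A-B\tilde B$ is a natural guess, but note that $\deg(A\tilde A)$ and $\deg(B\tilde B)$ can each exceed the bidegree $n$ of $p$, so such a $v$ will not in general satisfy the degree bound in Theorem~\ref{McDonald}; you would need a different ansatz. And again, even with a correct $v$, verifying that $p\pm tv$ stays zero-free on $\D^2$ is the genuine obstruction.

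In short: your proposal is a sensible research sketch, but it is not a proof, and the paper does not contain one either.
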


A very modest piece of evidence is the following.  See section \ref{modest}.

\begin{theorem}\label{oneone}
Conjecture \ref{conjecture} holds if $p$ has bidegree $(1,1)$.
\end{theorem}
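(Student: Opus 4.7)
The plan is to parametrize the face of $f$ explicitly via Theorem~\ref{McDonald} and reduce the question to a one-parameter convex-function problem. Since $p$ has bidegree $(1,1)$ and $\tilde{p}(0,0)=0$, the coefficient of $z_1 z_2$ in $p$ vanishes, so
\[
p = a + b z_1 + c z_2, \qquad a,b,c \in \C\setminus\{0\}.
\]
By elementary computation: $p$ is scattering stable iff $|a|\geq |b|+|c|$, $p$ is $\T^2$-saturated iff $|a|=|b|+|c|$, and $p-\tilde p$ is reducible iff $|a|^2=|b-\bar c|^2$ (match coefficients of a would-be factorization $(\alpha_0+\alpha_1 z_1)(\beta_0+\beta_1 z_2)$). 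Under $\T^2$-saturation, the last condition reduces via the equality case of the triangle inequality to $bc\in\R_{<0}$. Meanwhile, the constraints $v(0)=0$, $v=\tilde v$, $\deg v\leq (1,1)$ from Theorem~\ref{McDonald} force $v=\alpha z_1 + \bar\alpha z_2$ for some $\alpha\in\C$.

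Since $p+tv = a + (b+t\alpha)z_1 + (c+t\bar\alpha)z_2$ has the same shape as $p$, the zero-freeness of $p+tv$ in $\D^2$ is equivalent to $\phi(t)\leq |a|$, where
\[
\phi(t) := |b+t\alpha| + |c+t\bar\alpha|
\]
is convex in $t\in\R$ with $\phi(0)=|b|+|c|$. The face of $f$ contains a nontrivial element iff there exist $\alpha\neq 0$ and $s>0$ with $\phi(1)\leq |a|$ and $\phi(-s)\leq |a|$. If $|a|>|b|+|c|$, continuity gives $\phi<|a|$ on a neighborhood of $0$, so the face is nondegenerate and $f$ is not extreme; this already handles the $\T^2$-saturation half of the conjecture.

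Assume now $|a|=|b|+|c|$, so $\phi(0)=|a|$. A short convexity argument (applying $\phi(0)\leq \tfrac{1}{1+s}\phi(-s)+\tfrac{s}{1+s}\phi(1)$ and forcing equality throughout) then shows that the face is nondegenerate iff some $\alpha\neq 0$ makes $\phi\equiv|a|$ on a neighborhood of $0$. Since $\phi$ is a sum of two convex functions, this constancy forces both $|b+t\alpha|$ and $|c+t\bar\alpha|$ to be affine there with slopes summing to $0$. Now $|b+t\alpha|$ is affine in $t$ iff $b+t\alpha$ remains on a line through the origin, i.e., iff $\alpha\in\R b$; similarly $\alpha\in\R\bar c$. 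A nontrivial $\alpha$ therefore exists iff $\R b = \R\bar c$, iff $b=s\bar c$ for some $s\in\R$, iff $bc\in\R$. Substituting $\alpha=kb$ and computing derivatives, the zero-slope condition reduces to $|c|(|s|+s)=0$, which forces $s<0$, i.e., $bc\in\R_{<0}$. Combining with the first paragraph, the face is nondegenerate precisely when $p-\tilde p$ is reducible, completing the proof.

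The main obstacle is careful bookkeeping---verifying each of the equivalences and matching the algebraic-geometric reducibility condition with the analytic constancy of $\phi$. The bidegree-$(1,1)$ case is tractable because $p+tv$ stays in the ``constant plus linear'' form, for which zero-freeness in $\D^2$ is a single sum-of-moduli inequality, and the McDonald face collapses to one complex parameter $\alpha$. I do not expect this approach to generalize directly to higher bidegree, where the face has many more degrees of freedom and the reduction to a convex function on $\R$ is lost.
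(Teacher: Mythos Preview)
Your argument is correct and in fact proves slightly more than the paper does: you establish the full equivalence ($f$ extreme $\iff$ $p$ is $\T^2$-saturated and $p-\tilde p$ irreducible) in bidegree $(1,1)$, rather than just the implication of Conjecture~\ref{conjecture}. The paper takes a different route. After normalizing to $p=\mu(1-az-bw)$ with $a,b>0$ and $\mu\in\T$, it first invokes the general observation (made after Theorem~\ref{McDonald}) that $f$ extreme forces $p$ to have a zero on $\cd^2$; for bidegree $(1,1)$ a single boundary zero already gives saturation. It then checks by hand that $p-\tilde p$ factors only when $\mu=\pm i$, and in that case exhibits $f$ explicitly as a convex combination of the one-variable kernels $\tfrac{1+z_j}{1-z_j}$, contradicting extremality. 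Your approach instead works entirely through the McDonald face description: you reduce nondegeneracy of $\mcF(f)$ to whether the convex function $\phi(t)=|b+t\alpha|+|c+t\bar\alpha|$ can be made constant near $0$, and then match this analytic condition to the algebraic one $bc\in\R_{<0}$ via a slope computation. The paper's route is shorter but leans on earlier results in the paper; yours is self-contained modulo Theorem~\ref{McDonald} and makes the structure of the face completely explicit. As you observe, this clean reduction to a one-parameter convex problem is special to bidegree $(1,1)$.
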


The reason Theorem \ref{oneone} works out is that if $f\in PRPR_2$
is extreme and depends on both variables, then the associated $p$ must have at least one 
zero on $\T^2$.  In the case of degree $(1,1)$ polynomials, a single zero implies 
saturation.  In Section \ref{nonsaturated} we present an example of 
a non-saturated scattering stable polynomial with a zero on $\T^2$ and
we show that the associated $f$ is not extreme.  This at least
shows that $\T^2$-saturated should not be replaced with $p$
having at least one zero on $\T^2$.

One final piece of evidence in favor of
Conjecture \ref{conjecture} is the following.

\begin{theorem} \label{intthm}
Suppose $p\in \C[z_1,z_2]$ is scattering stable with $\tilde{p}(0,0)=0$.

If there exists $v\in\C[z_1,z_2]$ such that 
$\deg v \leq \deg \tilde{p}$, $v(0,0)=0$, $v=\tilde{v}$, and $p\pm v$ have
no zeros in $\D^2$, then $v/p \in L^2(\T^2)$.

Going in the other direction, if $p$ is not $\T^2$-saturated
then there exists nonzero $v\in \C[z_1,z_2]$ such that $v/p\in L^2(\T^2)$, 
$\deg v \leq \deg \tilde{p}$, $v=\tilde{v}$, and $v(0,0)=0$.
\end{theorem}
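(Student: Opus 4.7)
The plan is to handle the two directions of Theorem~\ref{intthm} separately, since they use different techniques.

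For direction 2, I would construct $v$ from a polynomial witnessing non-saturation. The second characterization of $\T^2$-saturation from \cite{GK} (Corollary 6.5) supplies a nonzero $f \in \C[z_1,z_2]$ with $\deg_j f < n_j$ for both $j$ and $f/p \in L^2(\T^2)$. To enforce the reflection symmetry and vanishing at the origin, I would set
\[
v_+ = z_1 f + \widetilde{z_1 f} \qquad \text{and} \qquad v_- = i(z_1 f - \widetilde{z_1 f}),
\]
where $\widetilde{\cdot}$ is taken at multidegree $n$. Both are fixed by reflection (using $\widetilde{cq} = \bar c \tilde q$ and $\widetilde{\widetilde q} = q$), have multidegree at most $n$, and the strict bounds $\deg_j f < n_j$ force $\widetilde{z_1 f}$ to have no constant term, giving $v_\pm(0) = 0$. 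On $\T^2$ the identity $|\widetilde{z_1 f}| = |z_1 f| = |f|$ yields $|v_\pm/p| \le 2|f/p| \in L^2(\T^2)$. If both $v_\pm$ vanished then $z_1 f$ would be zero, contradicting $f \ne 0$; so whichever is nonzero serves as $v$.

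For direction 1, the starting point is that $p + v$ has no zeros in $\D^2$ and $v = \tilde v$ (at multidegree $n$) gives $\widetilde{p+v} = \tilde p + v$. Hence the ratio $G = (\tilde p + v)/(p + v)$ is holomorphic on $\D^2$ with modulus $1$ a.e.\ on $\T^2$, i.e.\ $G$ is inner; similarly $G' = (\tilde p - v)/(p - v) \in S_2$. Consequently $g = (1+G)/(1-G) = (p + \tilde p + 2v)/(p - \tilde p)$ and $2f - g$ both belong to $PRP_2$, placing $g$ in $\mcF(f)$. A direct computation gives $G - F = v(p-\tilde p)/[p(p+v)]$, where $F = \tilde p/p$, yielding the key identity
\[
\frac{v}{p} \;=\; (G - F)\,\frac{p+v}{p - \tilde p}.
\]
Since $v/p$ is a rational function holomorphic on the open set $\{p \ne 0\}$, which contains $\D^2 \cup (\T^2 \setminus \{p = 0\})$, it is bounded on compact subsets away from the finite set $\{p = 0\} \cap \T^2$, so the $L^2$ contribution of $|v/p|^2$ off this finite set is finite.

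The remaining step is local $L^2$ integrability of $v/p$ near each boundary zero $\zeta \in \T^2$ of $p$. At each such $\zeta$ I would invoke Theorem~\ref{thmmults} to extract $I_\zeta(p, \tilde p) \le I_\zeta(p + v, \tilde p + v)$ and translate this intersection-multiplicity inequality into local $L^2$ integrability of $|v|^2/|p|^2$ near $\zeta$. This translation is the hardest part; I would approach it via Weierstrass-preparation factorizations of $p$ and $\tilde p$ at $\zeta$ and use the multiplicity bound to control the contact between $v$ and the zero variety of $p$ tightly enough to guarantee convergence of the 2D integral. The main obstacle is executing this local analysis cleanly when the vanishing order exceeds one or when distinct branches of the zero variety meet at $\zeta$.
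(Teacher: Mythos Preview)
Your treatment of the second direction is essentially the paper's own: take a nonzero $f$ of strictly lower bidegree with $f/p\in L^2(\T^2)$ and symmetrize by multiplying by a variable and adding the reflection. The paper writes $v = z_2 f + z_1 \tilde f$ (with $\tilde f$ the reflection at bidegree $(n_1-1,n_2-1)$), which is exactly your $v_+$ after unwinding the reflections, and handles nonvanishing by first dividing out factors of $z_1$ from $f$ rather than keeping both $v_\pm$; the arguments are otherwise identical.

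For the first direction there is a genuine gap. Your ``key identity'' $v/p=(G-F)(p+v)/(p-\tilde p)$ is correct but never actually used, and the decomposition that follows reduces the whole problem to the local step you yourself flag as unexecuted: proving $|v|^2/|p|^2$ is locally integrable near each boundary zero of $p$. Invoking Theorem~\ref{thmmults} does not close this gap. The inequality $I_\zeta(p,\tilde p)\le I_\zeta(p+v,\tilde p+v)$ compares intersection numbers of two \emph{different} pairs; it does not by itself control how $v$ meets the zero variety of $p$, which is what a Weierstrass-preparation argument would need. The translation from intersection multiplicity to $L^2$ integrability is precisely the content of the machinery from \cite{GK} (Proposition~\ref{lprop} in the paper), so you would be re-deriving that theory locally rather than bypassing it.

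The paper's argument for this direction is short and avoids any local analysis. Take Agler pairs $(E_\pm,F_\pm)$ for $p\pm v$ (rescaling $v$ if necessary so these are scattering stable). Averaging the two sums-of-squares identities
\[
|p\pm v|^2-|\tilde p\pm v|^2=(1-|z_1|^2)|E_\pm|^2+(1-|z_2|^2)|F_\pm|^2
\]
kills the cross terms and yields an Agler pair $(E,F)$ for $p$ with $|E|\ge\tfrac{1}{\sqrt2}|E_\pm|$ and $|F|\ge\tfrac{1}{\sqrt2}|F_\pm|$ on $\T^2$. By Proposition~\ref{lprop}, any $q$ with $q/(p+v)\in L^2(\T^2)$ then satisfies $q/p\in L^2(\T^2)$; applying this with $q=p+v$ gives $(p+v)/p\in L^2$ and hence $v/p\in L^2$.
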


Of course the key thing missing in the last statement is 
$p\pm t v$ have no zeros in $\D^2$ for some small $t>0$.  This theorem
requires some machinery from \cite{GK}.

In Section \ref{secdets}
we discuss some connections to determinantal representations
for stable polynomials
and transfer function representations for analytic functions.
In particular, we point out that saturated polynomials $p$
possess symmetric, contractive determinantal representations,
and the corresponding $F=\tilde{p}/p$ possess symmetric unitary
transfer function realizations.  These notions are all defined in
Section \ref{secdets}.  

\section{Proof of McDonald's characterization of faces}

In this section we will give a slightly more elementary proof 
of McDonald's characterization of faces (Theorem \ref{McDonald}) 
for $f=\frac{1+F}{1-F}$ where
$F$ is rational inner.  It is essentially the same proof except we avoid
measure theory and gain a little more
information and the face of $f$.

We first tackle the one variable case where $F$ is a Blaschke product
which we may write as $F = \frac{\tilde{p}}{p}$ for a polynomial $p\in \C[z]$
having no zeros in $\cd$ (because zeros on $\T$ get cancelled out) 
and with $\tilde{p}(0)=0$.
Then, $f = \frac{p+\tilde{p}}{p-\tilde{p}}$.  

Let $f_{1}$ be in the face $\mcF(f)$ 
corresponding to $f$.  This means there exist $a>0$ and analytic $H$ 
such that $f_t= f+tH \in PRP_1$ for
$t \in [-a,1]$.  It will be convenient later to write $H = \frac{2v}{p-\tilde{p}}$
for some holomorphic function $v$.
Division by $p-\tilde{p}$ is no loss of generality 
since this function is non-vanishing in $\D$ (indeed, by the maximum principle $F\ne 1$ in $\D$).  
Note that
\[
f_{1} = \frac{p+\tilde{p}}{p-\tilde{p}} + \frac{2 v}{p-\tilde{p}}
=\frac{1+\frac{\tilde{p}+v}{p+v}}{1-\frac{\tilde{p}+v}{p+v}}.
\]

Let $A=\{a_1,\dots, a_n\}$ be the set of zeros of $p-\tilde{p}$ on $\T$.
Since $\Re f = \frac{a}{a+1} \Re f_{1} + \frac{1}{a+1} \Re f_{-a} \geq 0$ and 
since $\Re f = \frac{|p|^2-|\tilde{p}|^2}{|p-\tilde{p}|^2} = 0$ 
for $z \in \T\setminus A$ we see that for all $t\in [-a,1]$
\[
\Re f_{t}(z) \to 0
\]
as $z \to \T\setminus A$.

By the Schwarz reflection principle, the $f_{t}$ extend analytically
across $\T\setminus A$ via the formula $f_{t} (z) = -\overline{f_{t}(1/\bar{z})}$ 
which even extends analytically to $\infty$.  The singularities at
the $a_j$ are either removable or simple poles.  
(Without going into too much detail we can give an elementary explanation.
The derivative of $f_{t}$ cannot vanish
on $\T\setminus A$ because of local mapping properties.  Thus, $f_{t}(e^{i\theta})$
is imaginary-valued and monotone off singularities and this is enough to prove 
$f_{t}$ omits an imaginary line segment near singularities.  
This implies $f_{t}$ cannot have an essential singularity at any $a_j$
by either the Big Picard theorem or by conformal mapping and the Casorati-Weierstrass theorem.
Any poles on $\T$ must be simple because of local mapping properties of $1/f_{t}$.)
Therefore, the $f_{t}$ are rational and by their symmetry properties we get

\[
\begin{aligned}
f_1(z) &= \frac{p(z)+\tilde{p}(z)}{p(z)-\tilde{p}(z)} + \frac{v(z)}{p(z)-\tilde{p}(z)} \\
&= -\overline{f_1(1/\bar{z})} = - \frac{\overline{p(1/\bar{z})}+\overline{\tilde{p}(1/\bar{z})}}{
\overline{p(1/\bar{z})}-\overline{\tilde{p}(1/\bar{z})}} -\frac{\overline{v(1/\bar{z})}}{
\overline{p(1/\bar{z})}-\overline{\tilde{p}(1/\bar{z})}}
\end{aligned}
\]
and after some simplification we get
\[
\frac{v(z)}{p(z)-\tilde{p}(z)} = \frac{z^n \overline{v(1/\bar{z})}}{p(z)-\tilde{p}(z)}
\]
for $z\notin A$.  Necessarily, $v$ has no poles (because
simple poles in the above function are accounted for by the 
denominator) and hence $v$ must be a polynomial.
Also, $v(z) = z^n\overline{v(1/\bar{z})}$ and $v(0)=0$.  

Thus, every element of the face associated to $f$ is of the form
\[
\frac{p+\tilde{p}}{p-\tilde{p}} + \frac{2v}{p-\tilde{p}}
\]
where $v$ is a polynomial such that $v=\tilde{v}$, $v(0)=0$, $p+v$ has no
zeros in $\D$, and there exists $a>0$ such that $p-av$ has no zeros in $\D$.

Conversely, given $v$ satisfying all of the above conditions, 
$\frac{\tilde{p}+v}{p+v}, \frac{\tilde{p}-av}{p-av}$ are finite Blaschke products,
and $f=\frac{p+\tilde{p}}{p-\tilde{p}}$ is a convex combination of $f_{1},f_{-a}$ as
defined above.

This gives the desired characterization of faces in the one variable case.  

Next we look at several variables.  
Write $f=\frac{p+\tilde{p}}{p-\tilde{p}}\in PRPR_d$ where $p\in \C[z_1,\dots,z_d]$ is 
scattering stable and has multidegree at most $n=(n_1,\dots, n_d)$.

Now, if $f+tH \in PRP_d$ for $t \in [-a,1]$ and $H = \frac{2v}{p-\tilde{p}}$ for some
analytic $v:\D^d\to \C$,
then we claim $v$ is a polynomial of degree at most $n$ with $v=\tilde{v}$.

We examine slices $f_{\zeta}(z):= f(z \zeta)$ 
where $z\in \D, \zeta \in \T^d$. We will use this notation
for other functions, not just $f$. 

If we verify the hypotheses of the following lemma, then we are finished.
The lemma will be proven at the end.

\begin{lemma}\label{slices} Let $h:\D^d\to \C$ be analytic. 
If for each $\zeta \in \T^d$, $h_{\zeta}(z)$ is a 
(one variable) polynomial
of degree at most $|n|=n_1+\dots+n_d$ 
satisfying $h_{\zeta}(z) = \zeta^n z^{|n|} \overline{h_{\zeta}(1/\bar{z})}$, 
then
$h$ is a ($d$ variable) polynomial of multidegree 
at most $n$ and $h=\tilde{h}$.
\end{lemma}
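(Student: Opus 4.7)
The plan is to work directly with the Taylor expansion of $h$ at the origin. Write $h(z) = \sum_{\alpha \in \Z_+^d} c_\alpha z^\alpha$, which converges on $\D^d$ since $h$ is holomorphic there. The key point is that the Taylor series of the slice $h_\zeta(z) = h(z\zeta)$ is obtained by collecting terms of the same total degree:
\[
h_\zeta(z) = \sum_{k \geq 0} b_k(\zeta)\, z^k, \qquad b_k(\zeta) = \sum_{|\alpha|=k} c_\alpha \zeta^\alpha,
\]
and by uniqueness of Taylor coefficients these $b_k$ are exactly the coefficients of the one-variable polynomial $h_\zeta$. Each $b_k$ is a trigonometric polynomial on $\T^d$, and distinct multi-indices $\alpha$ give linearly independent characters $\zeta \mapsto \zeta^\alpha$ on $\T^d$.

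First I would extract the total-degree bound. The hypothesis that $\deg h_\zeta \leq |n|$ says $b_k \equiv 0$ on $\T^d$ for $k > |n|$, which by linear independence of characters forces $c_\alpha = 0$ whenever $|\alpha| > |n|$.

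Next I would unpack the symmetry. The condition $h_\zeta(z) = \zeta^n z^{|n|}\overline{h_\zeta(1/\bar z)}$ translates, coefficient by coefficient in $z$, into
\[
b_k(\zeta) = \zeta^n\, \overline{b_{|n|-k}(\zeta)} \qquad \text{for } 0 \leq k \leq |n|.
\]
Using $\overline{\zeta^\alpha} = \zeta^{-\alpha}$ on $\T^d$, the right side becomes $\sum_{|\alpha|=|n|-k} \overline{c_\alpha}\, \zeta^{\,n-\alpha}$, and the left side is $\sum_{|\beta|=k} c_\beta \zeta^\beta$. Matching coefficients of characters $\zeta^\gamma$ on both sides (again by linear independence) produces three kinds of relations: characters $\zeta^\beta$ appearing only on the left (those with $|\beta|=k$ but some $\beta_j > n_j$) must have $c_\beta = 0$; characters $\zeta^{n-\alpha}$ appearing only on the right (those where some $\alpha_j > n_j$) must have $c_\alpha = 0$; and characters appearing on both sides — precisely $\zeta^\beta$ with $\beta \leq n$ componentwise — must satisfy $c_\beta = \overline{c_{n-\beta}}$.

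Combining this with the total-degree bound from the first step, the vanishing relations together assert $c_\alpha = 0$ whenever some $\alpha_j > n_j$, which is exactly the statement that $h$ has multidegree at most $n$. The remaining relation $c_\beta = \overline{c_{n-\beta}}$ for all $\beta \leq n$ is, upon comparing with the expansion $\tilde h(z) = z^n \overline{h(1/\bar z)} = \sum_{\gamma \leq n} \overline{c_{n-\gamma}}\, z^\gamma$, precisely $h = \tilde h$. There is no serious obstacle here — the argument is essentially Fourier uniqueness on $\T^d$ — the only point requiring a moment of care is verifying that rearranging the Taylor series of $h$ by total degree legitimately recovers the Taylor coefficients of the slice polynomial $h_\zeta$, which is immediate from the uniqueness of Taylor expansion of holomorphic functions of one variable.
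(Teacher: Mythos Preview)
Your proof is correct and follows essentially the same approach as the paper's own proof: expand $h$ as a power series, group by total degree to obtain the Taylor coefficients $b_k(\zeta)$ of the slice, and then use linear independence of characters on $\T^d$ (Fourier uniqueness) to read off first the total-degree bound and then the coefficient relations from the reflection identity. Your three-case analysis when matching characters is slightly more explicit than the paper's version, but the argument is the same.
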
 

Fix $\zeta \in \T^d$.
Since $p$ has multidegree at most $n$, $p_{\zeta}$ has
degree at most $|n|$ and we can calculate that
\[
\zeta^n \widetilde{p_{\zeta}}(z) = \tilde{p}_{\zeta}(z).
\]
Then, for a choice of $\mu= \sqrt{\zeta^n}$
\[
f_{\zeta} = \frac{p_{\zeta} +\zeta^n \widetilde{p_{\zeta}}}{p_{\zeta} -\zeta^n \widetilde{p_{\zeta}}}
=\frac{\bar{\mu} p_{\zeta} +\mu \widetilde{p_{\zeta}}}{\bar{\mu}p_{\zeta} -\mu \widetilde{p_{\zeta}}}.
\]
The point is that we are matching the earlier formulation of the Cayley transform.
However, in this case it is possible for $p_{\zeta}$ and $\widetilde{p_{\zeta}}$
to have common zeros, necessarily on $\T$.  Let $q$ be a greatest common divisor
of $p_{\zeta}$ and $\widetilde{p_{\zeta}}$. We can arrange for $q=\tilde{q}$ by 
multiplying by an appropriate constant since
all of the roots of $q$ will be on $\T$.
Let $g=\bar{\mu} p_{\zeta}/q$ which has degree at most $|n|-\deg q$.

Then, $f_{\zeta} = \frac{g+\tilde{g}}{g-\tilde{g}}$ and $H_{\zeta} = \frac{2\bar{\mu}v_{\zeta}/q}{g-\tilde{g}}$.
We can now apply the one variable result
to see that 
$\bar{\mu} v_{\zeta}/q$ is a polynomial of degree
$|n|-\deg q$ and is equal to its reflection which means
\[
\bar{\mu} v_{\zeta}(z)/q(z) = \mu z^{|n|} \overline{v_{\zeta}(1/\bar{z})}/q(z)
\]
and this implies the hypotheses of Lemma \ref{slices}, proven below.  
This lemma can essentially by found in McDonald \cite{McDonald1} but
we include a proof for convenience.

\begin{proof}[Proof of Lemma \ref{slices}]
We write out the power series
$h(w) = \sum_{\alpha} a_{\alpha} w^{\alpha}$, so that
\[
\begin{aligned}
h_{\zeta}(z) &= \sum_{\alpha} a_{\alpha} z^{|\alpha|} \zeta^{\alpha} \\
&= \sum_{j\geq 0} z^j(\sum_{\alpha: |\alpha|=j}a_{\alpha} \zeta^\alpha) 
\end{aligned}
\]
and we see that for $j> |n|$, we have $0\equiv \sum_{|\alpha|=j}a_{\alpha} \zeta^{\alpha}$ 
and therefore $a_{\alpha} = 0$ for $|\alpha|>|n|$ since this is 
an identically zero trigonometric polynomial.

The reflection condition on $h_{\zeta}$ implies for $0\leq j \leq |n|$
\[
\sum_{\alpha: |\alpha|=j}a_{\alpha} \zeta^\alpha =
\sum_{\alpha:|\alpha|=|n|-j} \overline{a_{\alpha}} \zeta^{n-\alpha}. 
\]
By matching Fourier coefficients, we see that $a_{\alpha}=0$ if for some $k$ we have $\alpha_k >n_k$.
Thus, $h$ has multidegree at most $n$ and the sum on the right can be reindexed as
\[
\sum_{\alpha: |\alpha| = j} \overline{a_{n-\alpha}} \zeta^{\alpha}
\]
(where $a_{\beta}=0$ if undefined) so that $a_{\alpha} = \overline{a_{n-\alpha}}$.
This exactly means $h=\tilde{h}$ if we examine coefficients.
\end{proof}

This proves McDonald's characterization of faces with the additional information
that $v_{\zeta}$ vanishes to at least the same order at a point of $\T$ as $p_{\zeta}$.
Using work of \cite{GK} it is possible to show
$v$ vanishes to at least the same order
as $p$ at a point of $\T^d$.  

\begin{proof}[Proof of Theorem \ref{addon}]

We may assume $\zeta = \mathbf{1}:=(1,\dots,1)$.  
Theorem 14.1 of \cite{GK} says that if $p\in \C[z_1,\dots, z_d]$
has no zeros in $\D^d$ and $p$ vanishes at $\mathbf{1}$ to order $M$, meaning
\[
p(\mathbf{1} - \zeta) = \sum_{j=M}^{N} P_M(\zeta)
\]
where the $P_j$ are homogeneous of total degree $j$ and $P_M\ne 0$, then
$P_M$ has no zeros in $RHP^d$.
Then, the order of vanishing of $p_{\mathbf{1}}(z) = p(z\mathbf{1})$ at $z=1$
equals $M$ because $P_M((1-z)\mathbf{1}) = (1-z)^M P_M(\mathbf{1})$ and $P_M(\mathbf{1}) \ne 0$.
(Geometrically, the variety $p=0$ has no radial tangents on $\T^d$.)

Note both $p$ and $p+v$ have no zeros in $\D^d$.
Thus, if $p$ vanishes to order $M$ at $\mathbf{1}$, then $p_{\mathbf{1}}$ 
vanishes to order $M$ at $1$.   In turn, $v_{\mathbf{1}}$ vanishes to
at least order $M$, and then so does $(p+v)_{\mathbf{1}}$, and we then
conclude 
the order of vanishing of $p+v$ at $\mathbf{1}$ is at least $M$. 
\end{proof}

If we use an additional result from \cite{GK} 
we can say something about the bottom homogeneous term
of $v$ at $\mathbf{1}$.  

Proposition 14.5 of \cite{GK} says that if $p \in \C[z_1,\dots,z_d]$ 
has no zeros in $\D^d$ and vanishes to order $M$ at $\mathbf{1}$
then writing homogeneous expansions:
\[
p(\mathbf{1} - \zeta) = \sum_{j=M}^N P_j(\zeta) \qquad
\tilde{p}(\mathbf{1} - \zeta) = \sum_{j=M}^{N} Q_j(\zeta) 
\]
we have that $Q_M$ is a unimodular multiple of $P_M$, say $Q_M = \mu^2 P_M$ for
some $\mu \in \T$.  (It will be convenient to take a square root of $\mu^2$ later.)
Set $F=\tilde{p}/p$.  
It is not hard to prove $F(\mathbf{1}-\zeta) \to \mu^2$ as $\zeta \to 0$ non-tangentially in 
$RHP^d$.  The details are in Proposition 14.3 of \cite{GK}.

We also need to write out the homogeneous expansion of $h$
\[
v(\mathbf{1}-\zeta) = \sum_{j=M}^N V_j(\zeta).
\]
By the above work, $v$ vanishes at least to order $M$ at $\mathbf{1}$.
It is possible that $v$ vanishes to higher order, so we allow for $V_M=0$.

We assume for simplicity that $p\pm v$ have no zeros in $\D^d$; this
is true after rescaling $v$.  The following inequalities hold
\[
|p\pm v|^2 - |\tilde{p} \pm v|^2 \geq 0 \text{ on } \D^d
\]
(see Lemma 14.4 of \cite{GK}).
We can rewrite this as 
\begin{equation} \label{rewrite}
|p(z)|^2-|\tilde{p}(z)|^2 \geq 2|\text{Re}\left((\overline{p(z)}-\overline{\tilde{p}(z)})v(z)\right)|.
\end{equation}
Let $\zeta \in RHP^d$ and set $z=z(t)=\mathbf{1} - t\zeta$. 
 For $t>0$ small enough $z(t) \in \D^d$ and since
$P_M$ is a unimodular multiple of $Q_M$, $|p(z(t))|^2-|\tilde{p}(z(t))|^2$ vanishes
to order at least $M(M+1)$ in $t$.  This must also hold
for the right hand side of \eqref{rewrite} which implies
\[
\text{Re}\left( (1-\bar{\mu}^2)\overline{P_M(\zeta)} V_M(\zeta)\right) = 0
\]
which implies $(1-\bar{\mu}^2)\frac{V_M}{P_M} = ic$ for some $c \in \R$.

If $\mu^2\ne 1$, then
the constant $\frac{ic}{1-\bar{\mu}^2}$ is of the form $r\mu$ for 
$r\in \R$.  We summarize the above.

\begin{theorem}\label{vhomo}
 Assume the setup of Theorem \ref{McDonald}.  
Suppose $p$ vanishes at $\mathbf{1}$ to order $M$.
Let $\mu^2 = F(\mathbf{1})$ in the sense of a
non-tangential limit.  Assume $\mu^2\ne 1$.
Then the 
bottom homogeneous terms of $p(\mathbf{1}-\zeta), v(\mathbf{1}-\zeta)$ at
$\zeta=0$, say $P_M, V_M$, satisfy
\[
V_M = r \mu P_M
\]
for some $r \in \R$.
\end{theorem}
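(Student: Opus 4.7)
The plan is to extract the homogeneous identity $V_M = r\mu P_M$ directly from the pointwise inequality coming from Lemma 14.4 of \cite{GK}, evaluated along a one-parameter radial approach $z(t) = \mathbf{1} - t\zeta$ for $\zeta \in RHP^d$. After rescaling so that both $p+v$ and $p-v$ are scattering stable, that lemma gives $|p \pm v|^2 \ge |\tilde p \pm v|^2$ on $\D^d$. Using $v = \tilde v$, averaging the two inequalities makes the $|v|^2$ terms cancel and leaves
\[
|p(z)|^2 - |\tilde p(z)|^2 \;\ge\; 2\bigl|\Re\bigl((\overline{p(z)}-\overline{\tilde p(z)})\,v(z)\bigr)\bigr|, \qquad z \in \D^d,
\]
which is the only analytic input I will need.

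Next I would fix $\zeta \in RHP^d$, set $z(t) = \mathbf{1} - t\zeta$ for small real $t > 0$ (so $z(t) \in \D^d$), and substitute the homogeneous expansions $p(\mathbf{1}-t\zeta) = \sum_{j\ge M} t^j P_j(\zeta)$ together with the analogous ones for $\tilde p$ and $v$. Because $t$ is real, $\overline{p(z(t))} = \sum t^j \overline{P_j(\zeta)}$. Using the key identity $Q_M = \mu^2 P_M$ from Proposition 14.5 of \cite{GK} (so $|Q_M| = |P_M|$), the $t^{2M}$ coefficient on the left cancels and $|p(z(t))|^2 - |\tilde p(z(t))|^2 = O(t^{2M+1})$. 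The right-hand side expands as
\[
2 t^{2M}\, \bigl|\Re\bigl((1-\bar\mu^2)\,\overline{P_M(\zeta)}\,V_M(\zeta)\bigr)\bigr| + O(t^{2M+1}),
\]
and matching orders as $t \to 0^+$ forces
\[
\Re\bigl((1-\bar\mu^2)\,\overline{P_M(\zeta)}\,V_M(\zeta)\bigr) = 0 \qquad \text{for every } \zeta \in RHP^d.
\]

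To turn this pointwise identity into the claimed rigidity, I would use Theorem 14.1 of \cite{GK}: $P_M$ has no zeros on $RHP^d$, so the degree-$0$ homogeneous rational function $h(\zeta) := (1-\bar\mu^2)\,V_M(\zeta)/P_M(\zeta)$ is holomorphic on the connected open set $RHP^d$. Dividing the vanishing identity by $|P_M(\zeta)|^2 > 0$ shows $\Re h \equiv 0$, and any holomorphic function on a connected open set with purely imaginary values is constant, so $h \equiv ic$ for some $c \in \R$. Assuming $\mu^2 \ne 1$, write $\mu = e^{i\theta} \in \T$: then $1 - \bar\mu^2 = 2i\bar\mu \sin\theta$, so
\[
\frac{V_M}{P_M} \;=\; \frac{ic}{1-\bar\mu^2} \;=\; \frac{c}{2\sin\theta}\,\mu \;=\; r\mu
\]
with $r \in \R$, which gives $V_M = r\mu P_M$.

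The step requiring the most care is the order-matching: one must confirm that the cancellation $|P_M|^2 = |Q_M|^2$ really accounts for the whole $t^{2M}$ term of $|p|^2 - |\tilde p|^2$, and that the leading $t^{2M}$ contribution on the right is exactly $2\Re((1-\bar\mu^2)\overline{P_M}V_M)$ from the $(j,k) = (M,M)$ cross term. Once that bookkeeping is in place, the passage from a purely imaginary holomorphic function on the connected cone $RHP^d$ to an imaginary constant, and then the algebraic rearrangement using $\mu \in \T$, are routine.
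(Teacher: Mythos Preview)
Your proposal is correct and follows essentially the same route as the paper: rescale so that $p\pm v$ are stable, combine the two inequalities from Lemma~14.4 of \cite{GK} into \eqref{rewrite}, plug in $z(t)=\mathbf{1}-t\zeta$ with $\zeta\in RHP^d$, use $Q_M=\mu^2 P_M$ to kill the $t^{2M}$ term on the left, and read off $\Re\bigl((1-\bar\mu^2)\overline{P_M}V_M\bigr)=0$. Your write-up is in fact a bit more explicit than the paper's in two places---you spell out why $V_M/P_M$ is holomorphic on $RHP^d$ (via Theorem~14.1 of \cite{GK}) before invoking the open mapping/constancy argument, and you carry out the algebra $ic/(1-\bar\mu^2)=r\mu$ in detail---but the underlying argument is identical.
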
 

This result is strong enough
to prove $g_2$ from the introduction
is extreme.
Note that for $p(z_1,z_2) = 2-z_1-z_2$ we have
\[
p(1-\zeta_1,1-\zeta_2) = \zeta_1+\zeta_2
\]
\[
\tilde{p}(1-\zeta_1,1-\zeta_2) =-\zeta_1-\zeta_2+2\zeta_1\zeta_2
\]
which means $\mu^2=-1$.  Then, any perturbation of $p$ must be
of the form $v(z_1,z_2)=ir z_1-irz_2$ where $r\in \R$ and 
\[
v(1-\zeta_1,1-\zeta_2) = ir(\zeta_2-\zeta_1)
\]
which is not a multiple of $\zeta_1+\zeta_2$ unless $r=0$.  

The theorem says nothing when $\mu^2= 1$
and this is for good reason.
Consider
\[
q = i(2-z_1-z_2)
\]
For $G = \frac{\tilde{q}}{q}$ we have $\lim_{r\nearrow 1} G(r,r) = 1$ and
$g=(1+G)/(1-G)$ is not extreme:
\[
g = \frac{1-z_1z_2}{(1-z_1)(1-z_2)} = \frac{1}{2}\left(\frac{1+z_1}{1-z_1} + \frac{1+z_2}{1-z_2}\right).
\]

\section{Preliminaries for Theorems \ref{thmsaturated} and \ref{thmmults}}\label{prelims}

McDonald's theorem reduces the possibilities for the face of 
a given $f$.  If $f$ is built out of a $\T^2$-saturated polynomial, 
we can limit $f$'s face further.

First, let us recall the Schur-Cohn test.

\begin{theorem}[Schur-Cohn test]
Let $p\in \C[z]$ and write $p(z)=\sum_{j=0}^{m}p_j z^j$.  Define
\[
P= \begin{pmatrix} p_0 & p_1 & \dots & p_{m-1} \\
 0 & p_0 & \dots & p_{m-2}\\
 \vdots & 0 & \ddots & \vdots \\
 0 & 0 & 0 & p_{0}
 \end{pmatrix} \quad 
 Q = \begin{pmatrix} \overline{p_{m}} & \overline{p_{m-1}} & \dots & \overline{p_1} \\
 0 & \overline{p_m} & \dots & \overline{p_2} \\
 \vdots & 0 & \ddots & \vdots \\
 0 & 0 & 0 & \overline{p_m}
 \end{pmatrix}
\]
Then, $p$ has no zeros in $\cd$ if and only if $P^*P-Q^*Q >0$.  
\end{theorem}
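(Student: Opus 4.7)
My plan is to identify the Hermitian matrix $P^*P - Q^*Q$ with the coefficient matrix of a natural polynomial kernel associated to $(p, \tilde{p})$, and then to apply reproducing-kernel theory.

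First I would establish the polynomial identity
\[
p(z)\overline{p(w)} - \tilde{p}(z)\overline{\tilde{p}(w)} = (1 - z\bar{w})\, K(z,\bar{w}),
\]
where $K$ is a polynomial of bidegree at most $(m-1,m-1)$ in $(z,\bar{w})$. Divisibility by $1-z\bar{w}$ follows because the substitution $\tilde{p}(z) = z^m \overline{p(1/\bar{z})}$ makes the left-hand side collapse when $z\bar{w}=1$. Writing $K(z,\bar{w}) = \sum_{i,j=0}^{m-1} K_{ij}z^i\bar{w}^j$ and comparing coefficients against $K - z\bar{w}K$ gives the recursion
\[
K_{k,\ell} - K_{k-1,\ell-1} = p_k\overline{p_\ell} - \overline{p_{m-k}}\,p_{m-\ell}
\]
with $K_{-1,*} = K_{*,-1} = 0$. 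Iterating this and comparing to a direct expansion of $(P^*P - Q^*Q)_{\ell,k}$ (which is a difference of partial autocorrelation sums of the coefficients of $p$) shows term-by-term that $[K_{ij}] = (P^*P - Q^*Q)^T$; positive definiteness of one is equivalent to that of the other, and both are equivalent to $K(z,\bar{w})$ being a strictly positive definite kernel on $\C$.

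For the forward direction, suppose $p$ has no zeros in $\overline{\D}$. Then the zeros of $\tilde{p}$ all lie in $\D$, so $p$ and $\tilde{p}$ are coprime and $F := \tilde{p}/p$ is a finite Blaschke product of exact degree $m$. Dividing the key identity by $p(z)\overline{p(w)}$ yields
\[
\frac{K(z,\bar{w})}{p(z)\overline{p(w)}} = \frac{1 - F(z)\overline{F(w)}}{1 - z\bar{w}},
\]
and the right-hand side is the reproducing kernel of the model space $H^2 \ominus FH^2$, a strictly positive definite kernel of rank exactly $m$. Multiplying by the rank-one positive kernel $p(z)\overline{p(w)}$ preserves positivity and rank (since multiplication by the nonzero polynomial $p$ is injective on the model space), so $K$ is a positive kernel of rank $m$ and its coefficient matrix is strictly positive definite.

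For the converse I argue by contrapositive. Suppose $p$ has a zero at $\alpha \in \overline{\D}$. If $\tilde{p}(\alpha) \neq 0$ (which forces $|\alpha| < 1$), then $K(\alpha,\bar{\alpha}) = -|\tilde{p}(\alpha)|^2/(1 - |\alpha|^2) < 0$, so $[K_{ij}]$ already fails to be positive semidefinite. Otherwise $p(\alpha) = \tilde{p}(\alpha) = 0$ (automatic when $\alpha \in \T$, via $\tilde{p}(\alpha) = \alpha^m \overline{p(\alpha)}$); using the reflection identity $\tilde{p}(z) = (1 - \bar{\alpha}z)\tilde{q}(z)$ with $q = p/(z-\alpha)$ one factors the key identity as $K(z,\bar{w}) = (z-\alpha)\overline{(w-\alpha)}\,K_q(z,\bar{w})$ (possibly after an inductive reduction handling any remaining common zeros), and the rank-one factor $(z-\alpha)\overline{(w-\alpha)}$ drops the rank of the $m\times m$ coefficient matrix below $m$, again precluding strict positivity. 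The main obstacle I anticipate is the boundary case $\alpha \in \T$: tracking the reflection identity and the degree drop requires care, but once handled the whole proof reduces to a clean identification of $P^*P - Q^*Q$ with a well-understood reproducing kernel.
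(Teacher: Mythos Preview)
The paper does not actually prove the Schur--Cohn test; it is quoted as a classical result with a reference to Pt\'ak--Young, so there is no ``paper's proof'' to compare against. That said, your approach is essentially correct and is a standard Bezoutian/reproducing-kernel proof of Schur--Cohn. In fact, the kernel identity you set up,
\[
\frac{p(z)\overline{p(w)}-\tilde p(z)\overline{\tilde p(w)}}{1-z\bar w}
=\sum_{i,j=0}^{m-1}K_{ij}\,z^i\bar w^{\,j},
\qquad [K_{ij}]=(P^*P-Q^*Q)^{T},
\]
is exactly the one-variable specialization of formula \eqref{scformula} in the paper (the transpose appears because the paper writes the kernel as $\overline{p(\eta)}p(w)-\overline{\tilde p(\eta)}\tilde p(w)$). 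Your forward direction is fine: once $p$ is zero-free on $\overline{\D}$, the space $p\,(H^2\ominus FH^2)$ is exactly the $m$-dimensional space of polynomials of degree $<m$, so the coefficient matrix of its reproducing kernel is positive definite.

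Your converse is also correct, but Case~2 is more complicated than it needs to be. If $p(\alpha)=\tilde p(\alpha)=0$ (which, as you note, is automatic when $|\alpha|=1$), then the defining identity gives
\[
(1-\alpha\bar w)\,K(\alpha,\bar w)=p(\alpha)\overline{p(w)}-\tilde p(\alpha)\overline{\tilde p(w)}=0
\]
for all $w$, hence $K(\alpha,\bar w)\equiv 0$. Reading off coefficients in $\bar w$ shows that $(1,\alpha,\dots,\alpha^{m-1})$ lies in the left kernel of $[K_{ij}]$, so the matrix is singular and therefore not positive definite. This replaces the factorization $K=(z-\alpha)(\bar w-\bar\alpha)K_q$ and the ``inductive reduction'' entirely; no delicate boundary analysis is needed.
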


Actually this result is due to Schur while Cohn proved a 
generalization (see \cite{PtakYoung}).

We can apply this to bivariate polynomials $p\in \C[z,w]$ of 
bidegree $(n,m)$ as follows.  
Write $p(z,w) = \sum_{j=0}^{m} p_j(z)w^j$.  
Then, $\tilde{p}(z,w) = \sum_{j=0}^{m} \tilde{p}_{m-j}(z) w^j$
where $\tilde{p}_j(z) = z^{n} \overline{p_j(1/\bar{z})}$.  
Note that we use $(z,w)$ for the coordinates in $\C^2$ instead
of $(z_1,z_2)$.

Define
\begin{equation}\label{PQ}
P(z)= \begin{pmatrix} p_0(z) & p_1(z) & \dots & p_{m-1}(z) \\
 0 & p_0(z) & \dots & p_{m-2}(z)\\
 \vdots & 0 & \ddots & \vdots \\
 0 & 0 & 0 & p_{0}(z)
 \end{pmatrix} \quad 
 Q(z) = \begin{pmatrix} \tilde{p}_{m}(z) & \tilde{p}_{m-1}(z) & \dots & \tilde{p}_1(z) \\
 0 & \tilde{p}_m(z) & \dots & \tilde{p}_2(z) \\
 \vdots & 0 & \ddots & \vdots \\
 0 & 0 & 0 & \tilde{p}_m(z)
 \end{pmatrix}.
\end{equation}
Note that for $z\in \T$, $Q(z)$ is almost a direct analogue of $Q$ above; 
it differs by a factor of $z^n$.  This gets 
cancelled out in the Schur-Cohn matrix calculation.

Indeed, $p$ has no zeros in $\T\times \cd$ 
if and only if $T_p(z)= P(z)^*P(z)-Q(z)^*Q(z)>0$ for 
all $z\in \T$.  

The following formula holds for $z\in \T, w,\eta \in \C$
\begin{equation}\label{scformula}
\frac{\overline{p(z,\eta)}p(z,w)-\overline{\tilde{p}(z,\eta)}\tilde{p}(z,w)}{1-\bar{\eta} w} = (1,\bar{\eta},\dots,\bar{\eta}^{m-1})T_p(z) (1,w,\dots, w^{m-1})^t.
\end{equation}

\begin{lemma} \label{nozonface}
If $p$ has no zeros in $\T\times \D$ or no zeros in $\D^2$, then $P(z)^*P(z)-Q(z)^*Q(z)\geq 0$ for all $z\in \T$.
\end{lemma}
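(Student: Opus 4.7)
The plan is to reduce to the one-variable Schur--Cohn test by slicing. For fixed $z\in\T$, consider the univariate polynomial $q(w):=p(z,w) = \sum_{j=0}^m p_j(z) w^j$. I would first verify that, for $z\in\T$, the matrix $T_p(z) = P(z)^*P(z)-Q(z)^*Q(z)$ agrees with the standard Schur--Cohn matrix of $q$. The only subtlety is comparing $\tilde p(z,\cdot)$ with $\tilde q$: using $1/\bar z = z$ for $z\in\T$ gives $\tilde p(z,\cdot) = z^n \tilde q$, so $Q(z) = z^n Q_q$, and the factor $|z^n|=1$ washes out in $Q(z)^*Q(z)$. Thus $T_p(z) = P_q^*P_q - Q_q^*Q_q$, and it suffices to prove the one-variable limiting statement: if $q\in\C[w]$ of degree at most $m$ has no zeros in $\D$, then $P_q^*P_q - Q_q^*Q_q \geq 0$.

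For that limiting statement I would apply the strict Schur--Cohn test to $q_\epsilon(w) := q(w/(1+\epsilon))$ for small $\epsilon>0$. Its zeros are $(1+\epsilon)$ times those of $q$, so the hypothesis ``no zeros in $\D$'' implies $q_\epsilon$ has no zeros in $\cd$. The strict test gives $P_{q_\epsilon}^*P_{q_\epsilon} - Q_{q_\epsilon}^*Q_{q_\epsilon} > 0$; the coefficients of $q_\epsilon$ and $\tilde q_\epsilon$ are obtained from those of $q$ and $\tilde q$ by diagonal rescalings $p_j \mapsto p_j(1+\epsilon)^{-j}$ that converge to the identity as $\epsilon\to 0$. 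Passing to the limit yields $P_q^*P_q - Q_q^*Q_q \geq 0$.

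It remains to check that under either hypothesis the slice $q = p(z,\cdot)$ has no zeros in $\D$ for every $z \in \T$. Under the first hypothesis (no zeros in $\T\times\D$) this is immediate. Under the second (no zeros in $\D^2$) a continuity-of-roots argument is needed: if $q \equiv 0$ then $p_j(z)=0$ for all $j$, and, using $z\in\T$, also $\tilde p_j(z) = z^n\overline{p_j(z)} = 0$, so $P(z)=Q(z)=0$ and the inequality is trivial; otherwise $q$ has only finitely many zeros, and if $q(w_0)=0$ with $w_0\in\D$, then Hurwitz's theorem produces, for every $z'$ sufficiently close to $z$, a root of $p(z',\cdot)$ near $w_0$ which still lies in $\D$. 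Choosing $z'\in\D$ near $z\in\T$ then gives a zero of $p$ in $\D^2$, contradicting the hypothesis. This continuity step is the only delicate part; the rest of the argument is bookkeeping that reduces the claim to the classical strict Schur--Cohn theorem.
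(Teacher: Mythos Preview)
Your proof is correct and follows essentially the same strategy as the paper: push the problem to the strict one-variable Schur--Cohn test via an approximation (your $q_\epsilon(w)=q(w/(1+\epsilon))$ is exactly the slice of the paper's $p_r(z,w)=p(z,rw)$) and pass to the limit. The only minor difference is in the second case: the paper reduces ``no zeros in $\D^2$'' to ``no zeros in $\T\times\D$'' by a second radial approximation $p(rz,w)$, whereas you argue directly via Hurwitz that each slice $p(z,\cdot)$ with $z\in\T$ has no zeros in $\D$---both are equally valid and equally short.
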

\begin{proof}
If $p$ has no zeros in $\T\times \D$, 
then the polynomial $p_r(z,w) = p(z,rw)$ has no zeros in $\T\times \cd$ for $r\in (0,1)$.
We can then form analogues of $P,Q$ corresponding to $p_r, \tilde{p_r}$ which we label $P_r, Q_r$.  The dependence on $r$ is evidently continuous.  Then, $P_r^*P_r-Q_r^*Q_r>0$ on $\T$. If we send $r\nearrow 1$, then $P^*P-Q^*Q\geq 0$ by continuity.  

If $p$ has no zeros in $\D^2$, then $p(rz,w)$ has no zeros in $\T\times \D$ for $r\in (0,1)$.  
We can again form analogues, say $P^r,Q^r$, of $P,Q$ depending on $r$ for which $(P^r)^*P^r-(Q^r)^*Q^r \geq 0$ on $\T$.
Then, send $r\nearrow 1$ to see $P^*P-Q^*Q \geq 0$ on $\T$.
\end{proof}

\begin{lemma} \label{scatnozonface}
If $p$ is scattering stable, then $p$ has no zeros in $(\T\times \D)\cup(\D\times\T)$.
\end{lemma}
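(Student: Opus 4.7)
The plan is a short Hurwitz-type argument, with a small amount of care devoted to a degenerate configuration. By symmetry between the two variables, it suffices to rule out zeros in $\T \times \D$; the $\D \times \T$ case follows by swapping coordinates.

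Suppose toward a contradiction that $p(\zeta_0, w_0) = 0$ for some $\zeta_0 \in \T$ and $w_0 \in \D$. Write $p(z,w) = \sum_{j=0}^{n_2} p_j(z) w^j$ and split into two cases.

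\emph{Degenerate case:} $p(\zeta_0, w) \equiv 0$ as a polynomial in $w$. Then $p_j(\zeta_0) = 0$ for every $j$, so $(z - \zeta_0)$ divides each coefficient $p_j(z)$ and hence divides $p(z,w)$ in $\C[z,w]$. Using $|\zeta_0| = 1$ together with $\tilde p(z,w) = z^{n_1} w^{n_2} \overline{p(1/\bar z, 1/\bar w)}$, a direct computation shows that $(z - \zeta_0)$ also divides $\tilde p$, contradicting condition (3) in the definition of scattering stability.

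\emph{Generic case:} $p(\zeta_0, w) \not\equiv 0$. Then $w_0$ is an isolated zero of the one-variable polynomial $p(\zeta_0, \cdot)$. As $z \to \zeta_0$, the polynomials $p(z, \cdot)$ converge to $p(\zeta_0, \cdot)$ uniformly on compact subsets of $\C$, so Hurwitz's theorem provides an open disk $U \subset \D$ around $w_0$ and an open neighborhood $V$ of $\zeta_0$ in $\C$ such that $p(z, \cdot)$ has a zero in $U$ for every $z \in V$. Choosing any $z \in V \cap \D$ then produces a zero of $p$ inside $\D^2$, contradicting scattering stability.

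The only subtle step is the degenerate case, where one has to upgrade a factor of $(z - \zeta_0)$ in $p$ to a matching factor in $\tilde p$; this is the single place where $|\zeta_0| = 1$ is essential. Everything else is routine continuity and Hurwitz.
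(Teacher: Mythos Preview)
Your proof is correct and follows essentially the same route as the paper's: a Hurwitz argument to push a boundary zero into $\D^2$, with the identically-zero slice handled by observing that $(z-\zeta_0)$ would then be a common factor of $p$ and $\tilde p$ (using $1/\bar\zeta_0=\zeta_0$). The only difference is organizational---you argue by contradiction with an explicit case split, whereas the paper phrases the Hurwitz step as a limit from $z_0\in\D$---but the mathematical content is the same.
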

\begin{proof}
For fixed $z_0\in \D$, $p(z_0,\cdot)$ has no zeros in $\D$.  By Hurwitz's theorem, if we send $z_0$ to $\T$, we see that $p(z_0,\cdot)$ either has no zeros in $\D$ or is identically zero.  In the latter case, $z-z_0$ divides $p$ and hence also $\tilde{p}$ since $z_0\in \T$.  Thus, $p$ is non-vanishing on $\T\times \D$ and by symmetry on $\D\times \T$.
\end{proof}

\begin{lemma}\label{possc}
Let $p\in \C[z,w]$ and form $P(z),Q(z)$ as above.
Then,
$P(z)^*P(z) - Q(z)^*Q(z) >0$ for all but finitely many $z\in \T$
if and only if
$p$ has no zeros in $(\T\times \cd)\setminus S$ where
$S$ consists of finitely many points in $\T^2$ and
finitely many ``vertical'' lines $z=z_0$ with $z_0\in \T$.

We can rule out vertical lines if we assume $p$ and $\tilde{p}$ 
have no common factors.  
\end{lemma}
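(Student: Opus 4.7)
The approach is to leverage identity \eqref{scformula} and the one-variable Schur-Cohn test to transfer between positivity of $T_p(z) := P(z)^*P(z) - Q(z)^*Q(z)$ and slice-wise stability of $p(z, \cdot) \in \C[w]$, with the behavior on the finitely many exceptional slices pinned down by a continuity-of-roots argument.

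For the reverse direction, assume $p$ has no zeros in $(\T \times \cd) \setminus S$. Let $E \subset \T$ be the finite set consisting of the $z$-coordinates of the isolated points and vertical lines comprising $S$, together with $\{z \in \T : p_m(z) = 0\}$ (which is finite because the bidegree assumption forces $p_m \not\equiv 0$). For each $z_0 \in \T \setminus E$, $p(z_0, \cdot)$ is a polynomial in $w$ of exact degree $m$ with no zeros in $\cd$, so the one-variable Schur-Cohn test gives $T_p(z_0) > 0$.

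For the forward direction, suppose $T_p(z) > 0$ on $\T \setminus E_0$ for some finite $E_0 \subset \T$. Taking $\eta = w \in \D$ in \eqref{scformula} yields $|p(z,w)|^2 > |\tilde p(z,w)|^2 \geq 0$ on $(\T \setminus E_0) \times \D$, so $p$ has no zeros there. Moreover, for $z \in \T \setminus E_0$ with $p_m(z) \ne 0$, the slice $p(z, \cdot)$ has exact degree $m$, and Schur-Cohn applied slice-wise forces no zeros in $\cd$, which rules out zeros on $\T^2$ over such $z$ as well. Hence all zeros of $p$ in $\T \times \cd$ lie over the finite set $E := E_0 \cup \{z \in \T : p_m(z) = 0\}$. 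For each $z_0 \in E$, either $p(z_0, \cdot) \equiv 0$, in which case $\{z_0\} \times \cd$ is a vertical line in the zero set, or $p(z_0, \cdot)$ is a nonzero polynomial whose zeros in $\cd$ can be located using a continuity-of-roots argument: as $z \to z_0$ through $\T \setminus E_0$, the roots of $p(z, \cdot)$ stay outside $\cd$, so those that do not escape to infinity (which can happen only when $p_m(z_0) = 0$) have limits in $\overline{\C \setminus \cd}$, hence outside $\D$, and these limits are precisely the roots of $p(z_0, \cdot)$. This gives the desired decomposition of the zero set into finitely many vertical lines plus finitely many isolated points in $\T^2$.

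The coprime addendum is a one-line calculation: a vertical line $\{z_0\} \times \cd$ in the zero set arises from a linear factor $(z - z_0) \mid p$ with $z_0 \in \T$, but then $\widetilde{(z - z_0)} = 1 - \bar z_0 z = -\bar z_0 (z - z_0)$ is a unit multiple of $(z - z_0)$, so $(z - z_0) \mid \tilde p$ as well, contradicting coprimality. The main technical delicacy is the continuity-of-roots step at slices where $p_m(z_0) = 0$ and the $w$-degree drops: some roots escape to infinity, but standard facts about polynomial roots under coefficient perturbation (e.g.\ by factoring out the leading coefficient and passing to the limit, or by a direct application of Rouch\'e) ensure that the remaining roots converge continuously to the roots of the lower-degree limit $p(z_0, \cdot)$, which is exactly what we need to place them outside $\D$.
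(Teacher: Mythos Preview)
Your proof is correct and takes essentially the same approach as the paper: apply Schur--Cohn slice-wise, use Hurwitz/continuity-of-roots on the finitely many exceptional slices, and note that a factor $(z-z_0)$ with $z_0\in\T$ is automatically shared by $\tilde p$. The paper's version is slightly leaner in that it invokes Schur--Cohn directly without singling out the case $p_m(z)=0$ (the test as stated already covers polynomials of degree at most $m$, and Hurwitz as a statement about analytic functions does not care about degree drop), but your extra care there is harmless.
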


We note that the condition $P^*P-Q^*Q>0$ for all but finitely many $z\in \T$ 
is equivalent to saying $P^*P-Q^*Q\geq 0$ and $\det(P^*P-Q^*Q)$ is not
identically zero.

\begin{proof}
By the Schur-Cohn test, $P(z)^*P(z)-Q(z)^*Q(z)>0$ 
for all but finitely many $z\in \T$ if and only if
 $w\mapsto p(z,w)$ has no zeros in $\cd$ for all but finitely many $z\in \T$.
The latter condition means there exists a finite set $S_1 \subset \T$ 
such that $w\mapsto p(z,w)$ has no zeros in $\cd$ except for when $z\in S_1$.  
By Hurwitz's theorem, if $z_0\in S_1$ then $w\mapsto p(z_0,w)$ 
either has no zeros in $\D$ or is identically zero. 
In the latter case, $z-z_0$ divides $p$ which means $p$ vanishes
on a vertical line.
In the former case, $p(z_0,\cdot)$ has no zeros in $\D$ (although it could have zeros in $\T$). 
 Thus, the condition $P^*P-Q^*Q>0$ for all 
but finitely many $z$ is equivalent to $p$ having
 no zeros in $(\T\times \cd) \setminus S$ where $S$ 
consists of finitely many vertical lines and finitely many points of $\T^2$.
\end{proof}

\begin{lemma} \label{commonfactor}
Assume $p\in\C[z,w]$ has bidegree $(n,m)$.
Let $r(z)$ be the resultant of $p$ and $\tilde{p}$ with respect
to the variable $w$.  Then, on $\T$
\[
r(z) = z^{nm} \det(P(z)^*P(z) - Q(z)^*Q(z))
\]
where $P,Q$ are defined in \eqref{PQ}.
In particular, 
the polynomials $p$ and $\tilde{p}$ have a common factor involving $w$
 if and only if $\det(P(z)^*P(z)-Q(z)^*Q(z))=0$ for all $z\in \T$.
\end{lemma}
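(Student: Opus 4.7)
The plan is to reduce the claimed identity to a classical one-variable identity between a Schur--Cohn (Bezoutian) determinant and a resultant, and to deduce the ``in particular'' statement from the standard algebra of resultants.

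First, I would fix $z\in\T$ and set $q(w) := p(z,w)$, a one-variable polynomial with coefficients $q_j := p_j(z)$. Using $|z|=1$, the definition $\tilde p_j(z) = z^n\overline{p_j(1/\bar z)}$ reduces to $\tilde p_j(z) = z^n\overline{q_j}$, so $\tilde p(z,w) = z^n \tilde q(w)$ where $\tilde q(w) := \sum_{j=0}^m \overline{q_{m-j}} w^j$ is the standard degree-$m$ reflection of $q$. Pulling $z^n$ out of each of the $m$ rows of the Sylvester matrix coming from $\tilde p$ yields
\[
r(z) = \mathrm{Res}_w(q,\,z^n\tilde q) = z^{nm}\,\mathrm{Res}(q,\tilde q).
\]
On the Schur--Cohn side, $|z^n|=1$ forces $Q(z)^*Q(z)$ to coincide with the $Q$-block built from the coefficients of $\tilde q$, so $T_p(z) = P(z)^*P(z) - Q(z)^*Q(z)$ agrees with the one-variable Schur--Cohn matrix $T_q$ of $q$. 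The bivariate identity therefore reduces to the one-variable identity $\det T_q = \mathrm{Res}(q,\tilde q)$, up to a sign determined by convention that is inessential for the conclusion.

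For the one-variable identity I would exploit the Bezoutian representation (the one-variable specialization of \eqref{scformula}), which says that $T_q$ is the matrix, in the monomial basis, of the kernel
\[
K(w,\eta) = \frac{\overline{q(\eta)}q(w)-\overline{\tilde q(\eta)}\tilde q(w)}{1-\bar\eta w}.
\]
If $w_0$ is a common zero of $q$ and $\tilde q$, then $K(w_0,\eta)\equiv 0$ in $\eta$, so $T_q(1,w_0,\dots,w_0^{m-1})^t = 0$ and $\det T_q = 0$. Conversely, if $\det T_q = 0$, any nonzero vector $\xi\in\ker T_q$ assembles into a polynomial $\phi(w) = \sum\xi_j w^j$ of degree less than $m$ whose existence, read off from the Bezoutian expansion, witnesses a common root of $q$ and $\tilde q$. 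Hence $\det T_q$ and $\mathrm{Res}(q,\tilde q)$ vanish on the same locus in the coefficient space of $q$; as both are polynomial expressions of matching (weighted) degree in $(q_j,\overline{q_j})$, they agree up to a universal constant, which a single explicit test polynomial (say $q(w) = w^m - c$) pins down.

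Combining the two steps gives the identity on $\T$. For the ``in particular'' statement, since $z^{nm}$ never vanishes on $\T$, $\det(P(z)^*P(z) - Q(z)^*Q(z))$ vanishes identically on $\T$ iff $r(z)\equiv 0$ on $\T$, which for a polynomial in $z$ is equivalent to $r\equiv 0$. By standard resultant theory, $r\equiv 0$ iff $p$ and $\tilde p$ share a non-constant common factor in $\C(z)[w]$, which by Gauss's lemma promotes to a common factor in $\C[z,w]$ involving $w$. The main subtlety is the one-variable Bezoutian--resultant identity; fortunately only the vanishing locus matters for the lemma's consequence, so even a proof up to a nonzero multiplicative constant would suffice for later applications.
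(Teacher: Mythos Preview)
Your reduction to a one-variable statement for each fixed $z\in\T$ is correct: the observations $\tilde p(z,w)=z^n\tilde q(w)$ on $\T$, hence $r(z)=z^{nm}\,\mathrm{Res}(q,\tilde q)$, together with $Q(z)^*Q(z)$ matching the one-variable $Q$-block, do reduce the lemma to $\det T_q=\mathrm{Res}(q,\tilde q)$. The gap is in your argument for this last identity. Two polynomial expressions in the coefficients that share a vanishing locus and a weighted degree need not be scalar multiples of one another (think of $x^2y$ versus $xy^2$); to conclude proportionality you would need, for instance, that one divides the other or that both are squarefree with the same irreducible factors, and you argue neither. The converse step ``a null vector of $T_q$ produces a common root of $q,\tilde q$'' is also only asserted. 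Your closing remark that the vanishing statement alone suffices downstream is true for the ``in particular'' clause, but the lemma as stated asserts the exact identity, so this does not close the gap.

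The paper sidesteps all of this with a short block-matrix computation. Setting $\tilde P(z)=z^nP(1/\bar z)^*$ and $\tilde Q(z)=z^nQ(1/\bar z)^*$, the $2m\times 2m$ Sylvester matrix of $p$ and $\tilde p$ with respect to $w$ is exactly
\[
R(z)=\begin{pmatrix} P(z) & \tilde Q(z)\\ Q(z) & \tilde P(z)\end{pmatrix}.
\]
Since $P$ and $Q$ are upper-triangular Toeplitz and therefore commute, the block-determinant formula gives $r(z)=\det R(z)=\det(\tilde P(z)P(z)-\tilde Q(z)Q(z))$, which on $\T$ equals $z^{nm}\det(P(z)^*P(z)-Q(z)^*Q(z))$. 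This produces the exact constant in one stroke, with no appeal to vanishing loci, degree counts, or test polynomials. Your reduction together with this block step (applied to the one-variable $q,\tilde q$) would constitute a complete proof.
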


\begin{proof}
This proof is from \cite{GW} (specifically Lemma 2.1.3 of that paper, 
which says their proof is inspired by similar
arguments for Bezoutians in \cite{LT}, Theorem 1 Section 13.3).  

Let $\tilde{P}(z) = z^n P(1/\bar{z})^*$ and $\tilde{Q}(z) = z^n Q(1/\bar{z})^*$.  Then,
\[
\begin{aligned}
R(z) &= \begin{pmatrix} P(z) & \tilde{Q}(z) \\ Q(z) & \tilde{P}(z) \end{pmatrix}\\
&= \begin{pmatrix} p_0(z) & \cdots & p_{m-1}(z) & p_{m}(z) & \ & \bigcirc \\
\ & \ddots & \vdots & \vdots & \ddots & \ \\
\bigcirc & \ & p_0(z) & p_1(z) & \cdots & p_m(z) \\
\tilde{p}_{m}(z) & \cdots & \tilde{p}_{1}(z) & \tilde{p}_{0}(z) & \ & \bigcirc \\
\ & \ddots & \vdots & \vdots & \ddots & \ \\
\bigcirc & \ & \tilde{p}_{m}(z) & \tilde{p}_{m-1}(z) & \cdots & \tilde{p}_0(z) \
\end{pmatrix} 
\end{aligned}
\]
is the resultant or Sylvester matrix of $p$ and $\tilde{p}$ with respect to $w$
and its determinant is the resultant $r$ of $p$ and $\tilde{p}$ with respect to $w$.
Since $P$ and $Q$ commute, we can compute
\[
r(z) = \det(\tilde{P}(z)P(z)-\tilde{Q}(z)Q(z))  
\]
and on $\T$ this agrees with $z^{nm}\det(P(z)^*P(z)-Q(z)^*Q(z))$.  
Since $\T$ is a set of uniqueness, this determinant is identically zero on $\T$ 
if and only if the resultant $r\equiv 0$ 
which by standard properties of the resultant holds
if and only if 
$p$ and $\tilde{p}$ have a common factor depending on $w$.
\end{proof}


We shall begin the proofs of Theorem \ref{thmsaturated} and
Theorem \ref{thmmults} simultaneously
and then diverge at a certain point.

Let $f=\frac{p+\tilde{p}}{p-\tilde{p}}$ be in $PRPR_2$ 
with associated scattering stable polynomial
$p\in \C[z,w]$. 

 By McDonald's characterization, 
points of the faces of
 $f$ are associated to 
 $v\in \C[z,w]$ satisfying: 
$v(0)=0$, $\deg v = \deg \tilde{p}$, $v=\tilde{v}$, and
there is an interval $I=[a,b]$ with $a<0<b$ such that  
for $t \in I$,  
$p+ t v$ has no zeros in $\D^2$.  
For our purposes, we can shrink the interval $I$ 
to be symmetric about $0$ and rescale $v$ so that
$p\pm v$ have no zeros in $\D^2$ and in fact $p\pm v$
are scattering stable.  
This is because the resultants associated to
$p,\tilde{p}$ will not be identically zero
and thus so will those of $p+tv$, $\tilde{p}+tv$
for $|t|$ small.

Write $v = \sum_{j=0}^{m} v_j(z) w^j$ and form
\[
V(z) = \begin{pmatrix} v_0(z) & v_1(z) & \dots & v_{m-1}(z) \\
 0 & v_0(z) & \dots & v_{m-2}(z)\\
 \vdots & 0 & \ddots & \vdots \\
 0 & 0 & 0 & v_{0}(z)
 \end{pmatrix}.
 \]

All of the matrix functions in the rest of this 
proof will be functions of $z$, so we will
omit the evaluations such as $V(z)$ and simply 
write $V$.
Since $p\pm v$ are scattering stable, 
 $(P\pm V)^*(P\pm V)-(Q\pm V)^*(Q\pm V) > 0$ for
all but finitely many $z\in \T$.

By the matrix Fej\'er-Riesz lemma, we can factor 
 \begin{equation} \label{factor}
 (P\pm V)^*(P\pm V)-(Q\pm V)^*(Q\pm V) = E_{\pm}^*E_{\pm}
 \end{equation} 
 where $E_{+}, E_{-}$ are matrix polynomials of degree at most $n$
with $\det E_{\pm}(z)$ non-zero for $z\in \D$.  
 We can also factor 
\begin{equation}\label{Epoly}
P^*P-Q^*Q = E^*E
\end{equation}
 where $E$ is a matrix polynomial of degree at most $n$ 
and is
non-singular whenever $z\in \D$. 

Let $r(z)$ be the resultant
of $p$ and $\tilde{p}$ with respect to $w$,
and let $r_{\pm}(z)$ be
the resultants of $p\pm v, \tilde{p}\pm v$ 
with respect to $w$ (for the choices of $+$ and $-$).
The number of roots of the
resultant $r(z)$ on $\T$
equals the number of common roots of $p$
and $\tilde{p}$ on $\T^2$ 
since $p$ has no zeros on $\D\times \T$
by Lemma \ref{scatnozonface}. 
More precisely, the multiplicity of a given
root of $r(z)$, say $z_0$, counts the number
of common roots of $p$ and $\tilde{p}$ on the line $z=z_0$.
Note that $r(z) = z^{mn} |\det(E)|^2$ and
$r_{\pm}(z) = z^{mn} |\det(E_{\pm})|^2$ on $\T$.

The left side of \eqref{factor} is
 \[
 P^*P-Q^*Q -(\pm 1)2\text{Re}((P-Q)^*V)
 \]
and averaging over $+$ and $-$ yields
\[
P^*P-Q^*Q = E^*E = \frac{1}{2}(E_{+}^*E_{+} + E_{-}^*E_{-})
\]
on $\T$.  Setting $\Phi_{\pm} = E_{\pm} E^{-1}$ we have 
$2I = \Phi_{+}^*\Phi_{+} + \Phi_{-}^{*}\Phi_{-}$ on $\T$ 
away from poles of $E^{-1}$.
But, this equation shows $\Phi_{\pm}$ are bounded
on $\T$ and so cannot have any poles.  So, $\Phi_{\pm}$
are analytic on $\cd$ since we already know
$E^{-1}$ has no poles in $\D$.

We conclude that on $\T$, $r_{\pm}(z) = r(z) |\det(\Phi_{\pm})|^2$.
Let $I_{\{z=z_0\}}(p,q)$ denote the number
of common zeros of two polynomials on the line $z=z_0$ counting
multiplicities.  We have just proved the following.

\begin{prop} \label{proplines}
Suppose $p\in \C[z,w]$ is scattering stable,
and suppose $v\in \C[z,w]$ satisfies $v(0)=0, \deg v= \deg \tilde{p}, v=\tilde{v}$,
as well as $p\pm v$ are scattering stable.
Then, for $z_0\in \T$
\[
I_{\{z=z_0\}}(p,\tilde{p}) \leq I_{\{z=z_0\}} (p\pm v, \tilde{p}\pm v).
\]
Moreover, both intersection multiplicities are even.
\end{prop}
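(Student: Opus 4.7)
The plan is to use the matrix Fej\'er-Riesz factorization of the Schur-Cohn test to compare the resultants $r, r_\pm$ of Lemma \ref{commonfactor}, which encode the intersection multiplicities appearing in the statement.

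I would first form the Schur-Cohn matrices $P, Q, V$ of $p, \tilde p, v$ regarded as polynomials in $w$ with coefficients in $\C[z]$. Scattering stability of $p$ and of $p \pm v$, together with Lemma \ref{possc}, gives that $P^*P - Q^*Q$ and $(P \pm V)^*(P \pm V) - (Q \pm V)^*(Q \pm V)$ are positive semidefinite on $\T$ and positive definite off a finite set. Matrix Fej\'er-Riesz then yields matrix-polynomial factorizations $E^*E$ and $E_\pm^* E_\pm$ in which $\det E$ and $\det E_\pm$ are nonvanishing on $\D$. The essential identity is the averaging
\[
E^* E \;=\; \tfrac{1}{2}\bigl(E_+^* E_+ + E_-^* E_-\bigr) \quad \text{on } \T,
\]
which comes for free from expanding $(P \pm V)^*(P \pm V) - (Q \pm V)^*(Q \pm V)$ and noting that the cross terms cancel upon averaging over signs. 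Setting $\Phi_\pm = E_\pm E^{-1}$, this rearranges to $\Phi_+^* \Phi_+ + \Phi_-^* \Phi_- = 2I$ almost everywhere on $\T$, so both $\Phi_\pm$ are bounded a.e.\ on $\T$ and analytic on $\D$, hence lie in $H^\infty(\D;\C^{m\times m})$; in particular $\det \Phi_\pm \in H^\infty(\D)$.

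Taking determinants and moduli on $\T$ then gives $r_\pm = r \cdot |\det \Phi_\pm|^2$ on $\T$. The ratio $r_\pm / r$ is a rational function whose boundary values on $\T$ agree almost everywhere with the bounded function $|\det \Phi_\pm|^2$, so it can have no pole at any point of $\T$. Hence every zero of $r$ at $z_0 \in \T$ of order $k$ is matched by a zero of $r_\pm$ at $z_0$ of order at least $k$, which by Lemma \ref{commonfactor} is the claimed inequality. For the parity statement, I would factor the polynomial $r(z)$ as $z^{mn - \deg \det E}\, \det E(z) \, \widetilde{\det E}(z)$; since $|z_0| = 1$, a zero of $\det E$ of order $k$ at $z_0$ automatically gives $\widetilde{\det E}$ a zero of the same order, so $r$ vanishes to even order $2k$ at $z_0$, and the same reasoning applies verbatim to $r_\pm$. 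The step requiring most care is justifying that $\Phi_\pm \in H^\infty$ despite possible zeros of $\det E$ on $\T$; the cleanest route is to extract the a.e.\ bound directly from the identity $\Phi_+^* \Phi_+ + \Phi_-^* \Phi_- = 2I$, rather than trying to analyze the behavior of $E^{-1}$ near zeros of $\det E$ by hand.
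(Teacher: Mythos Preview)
Your proposal is correct and follows essentially the same route as the paper: form the Schur--Cohn matrices, apply matrix Fej\'er--Riesz to get $E$ and $E_\pm$, use the averaging identity to show $\Phi_\pm = E_\pm E^{-1}$ is bounded on $\T$ (hence has no poles there), and read off the multiplicity inequality from $r_\pm = r\,|\det\Phi_\pm|^2$ together with the parity from $r = z^{mn}|\det E|^2$. The only cosmetic difference is that the paper phrases the conclusion as ``$\Phi_\pm$ is analytic on $\overline{\D}$'' (using that it is rational), whereas you pass through $H^\infty$ and then argue that the rational function $r_\pm/r$ has bounded boundary values and hence no $\T$-poles; these are equivalent.
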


The last statement follows from the formula $r(z) = z^{mn} |\det E |^2$
which implies that zeros on $\T$ occur with even multiplicity.

If the common zeros of $p$ and $\tilde{p}$ on $\T^2$ had distinct
$z$-coordinates (or $w$-coordinates) 
then we would be finished with Theorem \ref{thmmults}.  

At this stage the reader can jump to the next section to 
finish the proof of Theorem \ref{thmsaturated} or Section \ref{secmults}
to finish the proof of Theorem \ref{thmmults}.

\section{Completion of the proof of Theorem \ref{thmsaturated}}

Assume now that $p$ is $\T^2$-saturated and $p-\tilde{p}$ 
is irreducible.
Then, the resultant of $p$ and $\tilde{p}$, $r(z)$, 
has $2nm$ roots on $\T$ and these must be
all of its roots since $r(z)$
has degree at most $2nm$.  
This implies $\det(P^*P - Q^*Q) = \bar{z}^{nm}r(z)$ has $2nm$ roots on $\T$ 
and therefore $\det E$ has $nm$ roots on $\T$. 
 These must be all of the roots of $\det E$ 
since $E$ is $m\times m$ and of degree at most $n$.
  Thus, $E$ is invertible in $\D$ and $\C\setminus \cd$. 
Also, $z^nE(1/z)$ is an $m\times m$ matrix polynomial of degree at most $n$, 
which implies $\det (z^n E(1/z))$ has degree at most $nm$. 
 Since $\det E$ has $nm$ roots on $\T$, 
we can conclude that $\det (z^n E(1/z))$ has all of its roots on $\T$. 
Thus, $z^nE(1/z)$ is invertible at $0$.

At the same time, 
\[
E_{\pm}(1/z)E^{-1}(1/z)=(z^n E_{\pm}(1/z))(z^n E(1/z))^{-1}
\]
is analytic on $\D$ since $z^n E(1/z)$ is invertible in $\D$.  This implies $\Phi_{\pm}$ are analytic on the Riemann sphere, which implies $\Phi_{\pm}$ are constant $m\times m$ matrices.

Subtract equation \eqref{factor} with $+$ from $-$ to obtain
\begin{equation} \label{subtract}
4\text{Re}((P-Q)^*V) = E_{-}^*E_{-}-E_{+}^*E_{+}= E^*(\Phi_{-}^*\Phi_{-}-\Phi_{+}^*\Phi_{+}) E.
\end{equation}
Now we consider the matrices obtained from $p+t v$ where $t$ is a real parameter.  The matrices we get are
\[
(P+t V)^*(P+t V)-(Q+t V)^*(Q+t V)= E^*(I - t\frac{1}{2}\Phi_{-}^*\Phi_{-}+t\frac{1}{2}\Phi_{+}^*\Phi_{+})E.
\]
Unless $\Phi_{-}^*\Phi_{-} = \Phi_{+}^*\Phi_{+}$, there will be a value of $t$ such that 
\[
(I - t(\frac{1}{2}\Phi_{-}^*\Phi_{-}-\frac{1}{2}\Phi_{+}^*\Phi_{+}))
\] 
is singular since all matrices involved are self-adjoint.  
For such $t$, $\det((P+t V)^*(P+t V)-(Q+t V)^*(Q+t V))$ is identically zero on $\T$
and by Lemma \ref{commonfactor}  
this implies $p+tv$ and $\tilde{p}+tv$ have a common factor.
Such a common factor must be proper (meaning it has strictly
lower degree in one of the variables).  

Indeed, if it is
not proper, then $p+tv$ divides $\tilde{p}+tv$ or vice versa.
The latter possibility is excluded by the fact $p(0)\ne 0=\tilde{p}(0)=v(0)$.
In the former case, $\tilde{p}+tv = (p+tv) g$ 
for some non-constant polynomial $g$ with $g(0)=0$.
(Note we could not immediately rule this case out based on
degrees because $p+tv$ only has degree at most $(n,m)$.)
Then, $p-\tilde{p} = (p+tv)(1-g)$ is reducible contrary to our assumption.
However, a proper common factor of $p+tv$ and $\tilde{p}+tv$
implies a proper factor of $p-\tilde{p}$, which again contradicts
irreducibility of $p-\tilde{p}$.

We are left with the possibility $\Phi_{-}^*\Phi_{-}=\Phi_{+}^*\Phi_{+}$.
By \eqref{subtract} we have
$\text{Re}((P-Q)^*V) \equiv 0$, which implies 
\[
(P+t V)^*(P+t V)-(Q+t V)^*(Q+t V) = P^*P-Q^*Q
\] 
 for all $t\in \R$.  By \eqref{scformula}, this implies 
for $z\in \T$, $w,\eta\in \C$ that the following expression does
not depend on $t$
\[
\frac{\overline{(p+tv)(z,\eta)}(p+tv)(z,w) - \overline{(\tilde{p}+tv)(z,\eta)}(\tilde{p}+tv)(z,w)}{1-\bar{\eta}w}.
\]
The coefficient of $t$ is therefore identically
zero yielding
 \[
 \overline{(p-\tilde{p})(z,\eta)} v(z,w)+\overline{v(z,\eta)}(p-\tilde{p})(z,w)=0
 \]
 for $w,\eta\in \C, z\in \T$.  
Since $z\in \T$ and $v=\tilde{v}$,
if we replace $\eta$ with $1/\bar{\eta}$ and multiply through by $z^n\eta^m$ we get
 \[
 (\tilde{p}-p)(z,\eta) v(z,w)+ v(z,\eta)(p-\tilde{p})(z,w)=0
 \]
 for $w,\eta \in \C$.
As this is a polynomial in $z$ the identity holds for all $z\in \C$, not just $\T$.
 Since $p-\tilde{p}$ is assumed to be irreducible, either:
\begin{enumerate}
 \item $(p-\tilde{p})(z,w)$ divides 
 $(\tilde{p}-p)(z,\eta)$ or
\item  $(p-\tilde{p})(z,w)$ divides $v(z,w)$.  
\end{enumerate}

In addressing these two cases it is useful to point out that $p-\tilde{p}$ has
bidegree exactly $(n,m)$.  This is
because $p(0)\ne 0$ and the reflection of $p-\tilde{p}$ at the degree $(n,m)$
is $\tilde{p}-p$ which implies the coefficient of $z^nw^m$ in $(p-\tilde{p})(z,w)$ is
$-\overline{p(0)}$.

Case (1) implies $(p-\tilde{p})(z,w)$ does not
depend on $w$ which is only possible if $m=0$.
But,  $p-\tilde{p}$ being irreducible means $n=0$ or $n=1$ by
the fundamental theorem of algebra.  Having $n=0$ is not allowed and
when $n=1$ we automatically have $v=0$.  

Case (2) implies $v$ is a multiple of $p-\tilde{p}$ since $v$ has degree
at most $(n,m)$ and $p-\tilde{p}$ has degree exactly $(n,m)$. Since $p(0)\ne 0$
this is impossible unless $v=0$.

In all cases $v=0$.  This 
means the face of $f$ is the singleton $\{f\}$ and 
proves $f$ is an extreme point. 

\section{Completion of the proof of Theorem \ref{thmmults}}\label{secmults}
We pick up where we left off in Section \ref{prelims}.
Proposition \ref{proplines} shows
the number of common zeros of $p$ and $\tilde{p}$
on $\T^2$
is less than or equal to the number of 
common zeros of $p\pm v$ and $\tilde{p}\pm v$ on $\T^2$.
However, since the resultant is a global object 
we have only proven that the number of zeros on ``vertical'' lines 
in $\T^2$ increases.  
To fix this we cannot do a linear change of variables
as is often done when using the resultant to count 
common zeros (as in \cite{Fischer} for instance).
Instead we perform a more complicated change of variables
and use a different (but equivalent) definition of
intersection multiplicity.  

First, recall that the intersection
multiplicity of $p,q$ at a common zero $x\in \C^2$ 
is equal to 
\[
\dim \mathcal{O}_x/(p,q)\mathcal{O}_x.
\]
Here $\mathcal{O}_x$ is the localization
of $\C[z,w]$ at $x$, namely the ring of rational functions
with denominators non-vanishing at $x$.
Also, $(p,q)\mathcal{O}_x$ denotes the ideal generated by
$p,q$ within the local ring $\mathcal{O}_x$.  
This definition of intersection multiplicity is used
in \cite{Fulton}.

Now, suppose $p, \tilde{p}$ have a common zero on $\T^2$
which we assume without loss of generality is $\mathbf{1}=(1,1)$.
The map 
\[
\phi_k: F(z,w) \mapsto F(w^k z,w)
\]
is an isomorphism of the local ring $\mathcal{O}_{\mathbf{1}}$
onto itself
since it has inverse $G(z,w)\mapsto G(w^{-k} z, w)$.
This implies that
\begin{equation} \label{phik}
I_{\mathbf{1}}(p,\tilde{p}) = I_{\mathbf{1}}(\phi_k(p), \phi_k(\tilde{p})).
\end{equation}
This applies to $p\pm v$ as well.

Since $p$ is scattering stable, so is $\phi_k(p)$.  Indeed,
$\phi_k(p)$ has no zeros in $\D^2$ since $(z,w) \mapsto (w^k z, w)$
maps the bidisk into itself.  
Also, 
\[
\widetilde{\phi_k(p)} = z^{n} w^{kn+m} \overline{p(1/\bar{w}^k1/\bar{z}, 1/\bar{w})} = \phi_k(\tilde{p})
\]
If $\phi_k(p)$ and $\phi_k(\tilde{p})$ had
a nontrivial common factor say $\phi_k(p)=g h_1$, $\phi_k(\tilde{p}) = g h_2$ then
\[
p(z,w) = g(w^{-k} z,w) h_1(w^{-k} z,w)
\qquad
\tilde{p}(z,w)= g(w^{-k} z, w) h_2(w^{-k} z,w).
\]
One can then argue that $g(w^{-k} z,w)$ is either already a polynomial
or there exists $N$ such that $w^N g(w^{-k} z,w)$, $w^{-N}h_1(w^{-k} z,w)$, $w^{-N}h_2(w^{-k} z,w)$
are polynomials.  Either case implies $p$ and $\tilde{p}$ have a non-trivial common 
factor contrary to assumption.

The same arguments apply to $p\pm v$.  Therefore,
by Proposition \ref{proplines}
\[
I_{\{z=1\}}(\phi_k(p), \phi_k(\tilde{p})) \leq I_{\{z=1\}}(\phi_k(p)\pm \phi_k(v), \phi_k(\tilde{p})\pm \phi_k(v)).
\]
Finally, we can choose $k$ so that $\phi_k(p), \phi_k(\tilde{p})$ 
and $\phi_k(p)\pm \phi_k(v), \phi_k(\tilde{p})\pm \phi_k(v)$ have a single
common zero on the line $\{z=1\}$ namely $\mathbf{1}=(1,1)$. 
Then, the above intersection multiplicities
count the multiplicity at just $\mathbf{1}$ and these agree 
with the multiplicities associated to $p,\tilde{p}$ and $p\pm v, \tilde{p}\pm v$
as mentioned in equation \eqref{phik}.

This proves 
\[
I_{\mathbf{1}}(p,\tilde{p}) \leq I_{\mathbf{1}} (p\pm v, \tilde{p} \pm v)
\]
which proves Theorem \ref{thmmults}.

A corollary of the proof is the following fundamental fact.

\begin{corollary}
If $p\in \C[z,w]$ is scattering stable then for $\zeta \in \T^2$, 
$I_{\zeta}(p,\tilde{p})$
is even.
\end{corollary}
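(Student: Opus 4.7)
The plan is to reduce the pointwise claim to the vertical-line statement in Proposition \ref{proplines}. That proposition produces, on $\T$, the factorization $r(z) = z^{mn}|\det E(z)|^2$, where $r$ is the resultant of $p$ and $\tilde p$ with respect to $w$; since $p$ has no zeros on $\D\times\T$ (Lemma \ref{scatnozonface}), the multiplicity of any root $z_0\in\T$ of $r$ equals $I_{\{z=z_0\}}(p,\tilde p)$, and this multiplicity is therefore even. So vertical-line intersection multiplicities in $\T^2$ are even, and only localization to a single point remains. As a preliminary normalization I would use the coordinate change $q(z,w):=p(\zeta_1 z,\zeta_2 w)$, for which a direct computation gives $\tilde q(z,w)=\bar\zeta_1^{\,n}\bar\zeta_2^{\,m}\tilde p(\zeta_1 z,\zeta_2 w)$; thus $q$ is scattering stable and $I_\mathbf{1}(q,\tilde q)=I_\zeta(p,\tilde p)$, reducing the problem to $\zeta=\mathbf{1}$.

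Next I would apply the automorphism $\phi_k:F(z,w)\mapsto F(w^k z,w)$ of $\C[z,w]$ introduced in the proof of Theorem \ref{thmmults}. As established there, $\phi_k(p)$ is scattering stable with reflection $\phi_k(\tilde p)$, the pair $\phi_k(p),\phi_k(\tilde p)$ shares no factors, and $\phi_k$ is a local ring isomorphism at $\mathbf{1}$, so
\[
I_\mathbf{1}(p,\tilde p)=I_\mathbf{1}(\phi_k(p),\phi_k(\tilde p)).
\]
The key step is to choose $k$ so that $\mathbf{1}$ is the \emph{only} common zero of $\phi_k(p)$ and $\phi_k(\tilde p)$ on the line $\{z=1\}$ in $\T^2$. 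A point $(1,w_0)\in\T^2$ is such a common zero precisely when $(w_0^k,w_0)$ lies in the common zero set of $p,\tilde p$ on $\T^2$; that set is finite since $p,\tilde p$ are coprime. For each such zero $(\zeta_j,\eta_j)\ne\mathbf{1}$ with $\eta_j\ne 1$, the equation $\eta_j^k=\zeta_j$ excludes at most one residue class of $k$ modulo the order of $\eta_j$ (or a single integer, if that order is infinite), so admissible $k$ form a cofinite subset of $\Z$.

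For any such $k$ the localized multiplicity at $\mathbf{1}$ coincides with the vertical-line multiplicity,
\[
I_\mathbf{1}(\phi_k(p),\phi_k(\tilde p))=I_{\{z=1\}}(\phi_k(p),\phi_k(\tilde p)),
\]
and the right-hand side is even by the first paragraph applied to $\phi_k(p)$. The only real obstacle is the combinatorial bookkeeping in choosing $k$, and the finiteness of the common zero set of $p$ and $\tilde p$ on $\T^2$ dispatches that obstacle immediately.
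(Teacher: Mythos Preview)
Your proof is correct and follows exactly the paper's approach: the corollary is deduced from the evenness clause in Proposition~\ref{proplines} (via the factorization $r(z)=z^{mn}|\det E(z)|^2$ on $\T$) together with the $\phi_k$ localization argument from Section~\ref{secmults}. The only slip is the word ``cofinite''---if some $\eta_j$ is a root of unity you are excluding an entire residue class, not a finite set---but since you only need one admissible $k$ and finitely many residue classes (each proper) cannot cover $\Z$, this is harmless.
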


This follows because the intersection multiplicities in Proposition \ref{proplines}
are even.  We gave a detailed proof of this corollary in Appendix C of \cite{GK}.

\section{Degree $(1,1)$ polynomials} \label{modest}

\begin{proof}[Proof of Theorem \ref{oneone}]
Suppose $p$ is scattering stable, has degree $(1,1)$,
and $f=(p+\tilde{p})/(p-\tilde{p})$ is
an extreme point.

We can normalize so that $|p(0)|=1$. 
(We cannot normalize so that $p(0)=1$
because $p/p(0)$ has reflection $\tilde{p}/\overline{p(0)}$
not $\tilde{p}/p(0)$.)

Since $\tilde{p}(0)=0$, 
$p$ has the form $p(z,w) = \mu+az+bw$; $\mu\in \T, a,b,\in \C$.  
We can 
apply a rotation in each of the variables and
replace $p$ with the polynomial $\mu(1-|a|z-|b|w)$; 
we will drop the
absolute values on $a,b$ and assume $a,b >0$.  
This change of variables does not change extremality.

For $p$ to be stable we must have $1\geq a+b$.
Since $f$ is extreme, $p$ must have a zero on $\T^2$.
This forces $1=a+b$ and the zero is $(z,w)=(1,1)$.
Zeros on $\T^2$ occur with even multiplicity
and this automatically implies $p$ is $\T^2$-saturated.
(Thus, in this low degree situation a single zero forces
the polynomial to be saturated.)

We need to show $p-\tilde{p}$ is irreducible.  We have
\[
p-\tilde{p} = \mu+(\bar{\mu} b-\mu a)z+(\bar{\mu}a - \mu b)w - \bar{\mu}zw.
\]
If this were reducible it would factor into 
$\mu(1-cz)(1-dw)$.  We will omit the
details but the only way this can happen is if $c=d=1$ and 
$\mu = \pm i$.  We can normalize so $\mu = i$.  Then,
\[
p-\tilde{p} = i(1 - (z+w) +zw)
\]
while
\[
p+\tilde{p} = i(1+t(z-w) - zw)
\]
where $t=b-a$.
Then, one can directly check
\[
f = b \frac{1+z}{1-z} +a \frac{1+w}{1-w}
\]
contradicting extremality of $f$.
\end{proof}

The next theorem is an amusing consequence of the
above proof.

\begin{theorem}\label{thm:mobius}
Let $p\in \C[z_1,z_2]$ be scattering stable, $\tilde{p}(0,0)=0$, and $\deg p = (1,1)$.
Set $F = \tilde{p}/p$ and for $\nu \in \T$, 
\[
f_{\nu} = \frac{1+\nu^2 F}{1-\nu^2 F} = \frac{\bar{\nu} p + \nu \tilde{p}}{\bar{\nu} p - \nu \tilde{p}}.
\]
Then, there exists $\nu \in \T$ such that $f_{\nu}$ is not an extreme point of $PRP_2$.
\end{theorem}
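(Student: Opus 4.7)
The plan is to reduce to the normal form from the proof of Theorem \ref{oneone} and then split into two cases according to whether $p$ has a zero on $\T^2$. By applying a rotation in each variable (a biholomorphism of $\D^2$) and adjusting $\nu$ accordingly, I may assume $p(z,w) = \mu(1-az-bw)$ with $\mu\in\T$ and $a,b>0$, exactly as in the proof of Theorem \ref{oneone}. Scattering stability together with $\deg p = (1,1)$ then forces $a+b\leq 1$, and I distinguish the two cases $a+b<1$ and $a+b=1$.

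If $a+b<1$, then $|az+bw|\leq a+b<1$ for $(z,w)\in\overline{\D^2}$, so $p$ has no zeros in $\overline{\D^2}$. The same holds for $\bar\nu p$, so the observation recorded immediately after Theorem \ref{McDonald} in the introduction shows that the face of $f_\nu$ has positive real dimension for every $\nu\in\T$; hence $f_\nu$ is never extreme in this case, and any $\nu$ works.

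If $a+b=1$, I take $\nu=i\mu$. A short computation using $\tilde p = \bar\mu(zw-aw-bz)$ and $a+b=1$ yields
\[
\bar\nu p - \nu\tilde p = -i(1-z)(1-w),
\]
together with a matching expression for $\bar\nu p + \nu\tilde p$, leading to
\[
f_\nu = b\,\frac{1+z}{1-z} + a\,\frac{1+w}{1-w},
\]
the same convex decomposition that appeared at the end of the proof of Theorem \ref{oneone}. Since $a,b>0$ and $a+b=1$, this is a nontrivial convex combination of two elements of $PRP_2$, so $f_\nu$ is not extreme. The one nonroutine step is guessing the correct $\nu$, which I expect to be the main (if minor) obstacle: it is dictated by demanding that $\bar\nu p - \nu\tilde p$ factor as a scalar multiple of $(1-z)(1-w)$, which forces $\nu^2=-\mu^2$; everything else is algebra.
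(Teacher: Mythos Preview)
Your proof is correct and follows essentially the same approach as the paper. Your case split ($a+b<1$ versus $a+b=1$) coincides with the paper's split (not $\T^2$-saturated versus $\T^2$-saturated), and in each case the argument matches: the non-saturated case uses the dimension remark after Theorem~\ref{McDonald}, and the saturated case takes $\nu=i\mu$ to force $\bar\nu p-\nu\tilde p$ to factor as a constant times $(1-z)(1-w)$, yielding the explicit convex decomposition.
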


I. Klep and J. Pascoe have found an example
which shows this theorem does not hold more generally.

\begin{proof}
Let $p\in \C[z,w]$ be scattering stable, $\deg p = (1,1)$, and $\tilde{p}(0,0)=0$.
We claim some $f_{\nu}$ as in the theorem statement is not extreme.
By Theorem \ref{oneone} if $p$ is not $\T^2$-saturated, then $f_{\mu}$ is
never extreme.  So, we assume $p$ is $\T^2$-saturated.  

If we follow the proof of Theorem \ref{oneone} above, we
see that if we set $\nu = i\mu$ then $\bar{\nu} p - \nu \tilde{p} = i(1-z)(1-w)$
which means $f_{\nu}$ is not extreme.  
\end{proof}
 
\section{Example of a non-saturated polynomial}\label{nonsaturated}

Let $p(z,w) = 3-z-z^2-w$ which is scattering stable, has degree $(2,1)$, 
and has a single zero on $\T^2$ at $(z,w)=(1,1)$.  
The multiplicity of this zero is $2$, so $p$ is not saturated (which
would require $4$ boundary zeros counting multiplicity).
One can compute that this multiplicity is $2$ using the resultant (with
respect to $w$) of
$p$ and $\tilde{p} = -z^2 -w - zw +3z^2w$ which is
\[
\det\begin{pmatrix} 3-z-z^2 & -1 \\ -z^2 & -1-z+3z^2 \end{pmatrix} = -(z-1)^2(3z^2+8z+3).
\]

Next, we claim that $f=\frac{p+\tilde{p}}{p-\tilde{p}}$ is not extreme.
To do this we must perturb $p$ with $v$ satisfying $v=\tilde{v}$, $v(0,0)=0$,
and stay stable.  Let $v(z,w) = (1-z)(z-w)$.  We claim $p+tv$ has no
zeros in $\D^2$ for $t\in [-7/4,1/2]$.  By the Schur-Cohn test
we check when the following expression is non-negative
on $\T$
\[
|3-z-z^2+t(1-z)z|^2 - |-z^2 + t(1-z)z|^2.
\]
This can be rewritten as 
\[
|1-z|^2(2t+8 + 6(t+1)\text{Re} z)
\]
which is non-negative on $\T$ for $t\in [-7/4,1/2]$. 
Thus, for such $t$, $p+tv$ has no zeros in $\T\times \cd$
except for finitely many points on $\T^2$ and finitely many
vertical lines by Lemma \ref{possc}.
We can rule out vertical lines since
\[
p-\tilde{p} = 3-z + (3z-z^2)w
\]
is irreducible (and if $p+tv$ had a factor $z-z_0$ with $z_0\in \T$
then so would $\tilde{p}+tv$ and so would $p-\tilde{p}$).
By Lemma \ref{scatnozonface} $p+tv$ has no zeros in $\T\times \D$.  
For $w\in \D$, the number of zeros 
of $z\mapsto (p+tv)(z,w)$ in $\D$ depends continuously 
on $w$ by the argument principle and hence must be constant.  
We can show $(p+tv)(z,0) = 3+(1-t)z-(1+t)z^2$ has no zeros
in $\D$ directly by noting $|t-1|+|t+1|\leq 3$ for $t\in [-7/4,1/2]$.
Therefore, $p+tv$ has no zeros in $\D^2$ for $t\in [-7/4,1/2]$.  
This proves $f$ is not extreme.  

The endpoints of the line segment
$q_{-}=p-7v/4$ and $q_+=p+v/2$ are
\[
q_{-} = 3-(11/4)z+(3/4)w -(3/4)z^2 -(7/4)zw
\]
\[
q_+ = 3-(1/2)z - (3/2)w-(3/2)z^2+(1/2)zw.
\]
Now, $q_{-}$ has two zeros on $\T^2$; $(1,1),(-1,1)$.  Each
has multiplicity 2 which implies $q_{-}$ is $\T^2$-saturated. 
Since $p-\tilde{p}= q_{-}-\tilde{q_{-}}$ is irreducible we conclude
that $f_{-} = \frac{q_{-} + \tilde{q}_{-}}{q_{-} - \tilde{q}_-}$ 
is an extreme point by Theorem \ref{thmsaturated}.

Next, $q_{+}$ has a single zero on $\T^2$; $(1,1)$.  It occurs
with multiplicity $4$ which implies $q_{+}$ is $\T^2$-saturated.
As with $q_{-}$, we see that the analogue of $f_{-}$, namely $f_+$,
is also an extreme point.

Then, 
\[
f = (2/9) f_{-} + (7/9) f_{+}
\]
expresses $f$ as a convex combination of extreme points.

\section{Integrability of perturbations}  

In this section we prove Theorem \ref{intthm}.
We need to use several definitions and results from \cite{GK}.

\begin{definition}
If $p\in \C[z,w]$ is scattering stable and $\vec{A}_1\in\C^{N}[z,w],\vec{A}_2\in \C^M[z,w]$
are vector polynomials satisfying
\[
|p|^2-|\tilde{p}|^2 = (1-|z|^2)|\vec{A}_1|^2 + (1-|w|^2)|\vec{A}_2|^2
\]
then we call $(\vec{A}_1,\vec{A}_2)$ an \emph{Agler pair}.
\end{definition}

For this definition, $N,M$ need not have any relation
to the bidegree $(n,m)$ of $p$.  One can in fact show $N\geq n, M\geq m$,
but this is not important for now.  Later we do look at Agler pairs with minimal
dimensions.
 
It is proven in \cite{CW, GKpnoz} that $p$ has at least
one Agler pair. (These vector polynomials are
closely related to so-called Agler kernels for
more general bounded analytic functions.  See \cite{BK}.)

Lemma 7.3 of \cite{GK} proves that the values of $(|\vec{A}_1|, |\vec{A}_2|)$
on $\T^2$ are the same for all Agler pairs.  
Therefore, the following proposition does
not depend on the particular Agler pair.

\begin{prop}\label{lprop}
Suppose $p\in \C[z,w]$ is scattering stable 
and $(\vec{A}_1,\vec{A}_2)$ is an Agler pair for $p$.

Given $q \in \C[z,w]$, $q/p \in L^2(\T^2)$
if and only if there is a constant $C$ such that
\[
|q| \leq C(|\vec{A}_1| + |\vec{A}_2|)
\]
holds on $\T^2$.  
\end{prop}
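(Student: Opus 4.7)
For the easy direction ($\Leftarrow$), the bound $|q|^2 \leq 2C^2(|\vec{A}_1|^2 + |\vec{A}_2|^2)$ reduces the problem to showing $\vec{A}_j/p \in L^2(\T^2)$ for $j=1,2$. My plan is to exploit the Agler decomposition restricted to the face $\T \times \D$: since $(1-|z|^2)$ vanishes when $z \in \T$, we get
\[
\frac{1-|F(z,w)|^2}{1-|w|^2} = \left|\frac{\vec{A}_2(z,w)}{p(z,w)}\right|^2 \quad \text{for } z \in \T,\ w \in \D,
\]
and the quotient on the right is well-defined by Lemma~\ref{scatnozonface}. For almost every fixed $z \in \T$, $w \mapsto F(z,w)$ is a one-variable finite Blaschke product of degree at most $m$; applying the classical boundary identity that $\int_\T (1-|B(w)|^2)/(1-|w|^2)$ (understood as a boundary value / radial derivative) equals $\deg B$ yields $\int_\T |\vec{A}_2(z,w)/p(z,w)|^2\,dm(w) \leq m$ for a.e.\ $z$. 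Fubini then gives $\vec{A}_2/p \in L^2(\T^2)$, and the analogous argument with the roles of $z,w$ swapped handles $\vec{A}_1$.

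For the harder direction ($\Rightarrow$), assume $q/p \in L^2(\T^2)$. My plan is to work slice-wise: by Fubini, $q(z,\cdot)/p(z,\cdot) \in L^2(\T)$ for a.e.\ $z \in \T$. Combined with the Fej\'er-Riesz factorization from Section~\ref{prelims} producing $\vec{A}_2(z,w) = E(z)v_w$ with $v_w = (1,w,\ldots,w^{m-1})^T$, a Plancherel/Wiener-Hopf-type identity (analogous to the one driving Corollary~6.5 of \cite{GK}) should express the slice-wise $L^2$-norm as an inner product involving $E(z)$ and $q$'s coefficient vector, yielding a pointwise bound $|q(z,w)| \leq C(z)|\vec{A}_2(z,w)|$ on the slice $\{z\}\times \T$. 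A symmetric argument produces a bound involving $\vec{A}_1$. Combining via continuity of both sides away from the common zero set of $(\vec{A}_1,\vec{A}_2)$, and compactness of $\T^2$, should then deliver the global pointwise bound.

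The main obstacle will be the ($\Rightarrow$) direction, in particular making the slice-wise $L^2$ control uniform in $z$ and then patching it into a pointwise bound involving precisely the sum $|\vec{A}_1|+|\vec{A}_2|$. I expect this step to require genuine input from \cite{GK}, notably the uniqueness of $(|\vec{A}_1|,|\vec{A}_2|)$ on $\T^2$ (Lemma~7.3 therein) and the Schur-Cohn/sums-of-squares framework underlying Corollary~6.5, rather than a purely elementary argument; the easy direction by contrast appears to follow from standard one-variable Blaschke theory and Fubini alone.
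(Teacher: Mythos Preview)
The paper's proof is purely a citation: both directions are deferred to Lemma~7.3 and Corollary~7.4 of \cite{GK}, which give explicit formulas for $|\vec{A}_1|,|\vec{A}_2|$ on $\T^2$ in terms of $p$ alone and then state exactly this equivalence in terms of those formulas.

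Your easy direction ($\Leftarrow$) is correct and in fact supplies more than the paper does; the slice argument showing $\vec{A}_j/p\in L^2(\T^2)$ via one-variable model-space kernels is sound.

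Your hard direction ($\Rightarrow$) has a genuine gap: the outlined steps do not actually use the hypothesis $q/p\in L^2(\T^2)$ in any nontrivial way. Because $p$ is scattering stable, $p(z,\cdot)$ is zero-free on $\T$ for all but finitely many $z\in\T$, so $q(z,\cdot)/p(z,\cdot)$ is continuous on $\T$ and the Fubini conclusion ``$q(z,\cdot)/p(z,\cdot)\in L^2(\T)$ for a.e.\ $z$'' holds for \emph{every} polynomial $q$, extracting nothing from the hypothesis. For the same reason, on generic slices $T_p(z)>0$, so $|\vec{A}_2(z,\cdot)|$ is bounded below on $\T$ and a bound $|q(z,w)|\le C(z)|\vec{A}_2(z,w)|$ is automatic by continuity---again for every $q$---with $C(z)$ uncontrolled as $z$ approaches the finitely many bad values. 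Nothing in your sketch explains how the global $L^2$ condition is converted into a \emph{uniform} $C$; you identify uniformity as the obstacle, but the slice data you propose to make uniform are vacuous to begin with. The actual argument is a local analysis at each common zero of $p,\tilde p$ on $\T^2$: Lemma~7.3 of \cite{GK} gives the precise local vanishing of $|\vec{A}_1|+|\vec{A}_2|$ there, and Corollary~7.4 matches that vanishing rate to local $L^2$-integrability of $q/p$. You are right that \cite{GK} is needed, but the slice-wise mechanism you outline is not the one that gets there.
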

\begin{proof}
Lemma 7.3 of \cite{GK} proves explicit formulas
for $|\vec{A}_1|, |\vec{A}_2|$ on $\T^2$ in terms of $p$.
Corollary 7.4 of \cite{GK} proves that the given inequality
is equivalent to $q/p\in L^2(\T^2)$ however it is stated using
said explicit formulas from Lemma 7.3 (so that the corollary 
does not need to reference Agler pairs). 
\end{proof}

The first part of Theorem \ref{intthm} is contained in the following
proposition.

\begin{prop}\label{propvp}
Suppose $p$ is scattering stable, $v=\tilde{v}$, and $p\pm v$ are scattering stable.  
Let $q \in \C[z,w]$. If $\frac{q}{p+ v}\in L^2(\T^2)$ then $\frac{q}{p} \in L^2(\T^2)$.
In particular, $v/p \in L^2(\T^2)$. 
\end{prop}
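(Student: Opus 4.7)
The plan is to let Proposition \ref{lprop} do the heavy lifting: it converts $L^2$-integrability of a ratio $q/p$ into the pointwise inequality $|q|\le C(|\vec{A}_1|+|\vec{A}_2|)$ on $\T^2$ for any Agler pair $(\vec{A}_1,\vec{A}_2)$ of $p$. So I just need a concrete pointwise comparison on $\T^2$ between Agler pairs for $p$ and for $p+v$, and both parts of the proposition reduce to this.

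First I would observe that $v=\tilde{v}$ forces $\widetilde{p\pm v}=\tilde{p}\pm v$, so the scattering stability of $p\pm v$ provides Agler pairs $(\vec{B}_1,\vec{B}_2)$ for $p+v$ and $(\vec{C}_1,\vec{C}_2)$ for $p-v$. Averaging the two Agler identities and using the parallelogram-law cancellations
\[
|p+v|^2+|p-v|^2=2|p|^2+2|v|^2,\qquad |\tilde{p}+v|^2+|\tilde{p}-v|^2=2|\tilde{p}|^2+2|v|^2
\]
eliminates the $|v|^2$ terms and yields
\[
|p|^2-|\tilde{p}|^2 = (1-|z|^2)\tfrac{1}{2}(|\vec{B}_1|^2+|\vec{C}_1|^2)+(1-|w|^2)\tfrac{1}{2}(|\vec{B}_2|^2+|\vec{C}_2|^2).
\]
Repackaging $\tfrac{1}{\sqrt{2}}(\vec{B}_j,\vec{C}_j)$ as a direct sum exhibits a genuine Agler pair for $p$, and then Lemma 7.3 of \cite{GK} forces the pointwise identifications $|\vec{A}_j|^2=\tfrac{1}{2}(|\vec{B}_j|^2+|\vec{C}_j|^2)$ on $\T^2$. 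In particular, $|\vec{B}_j|\le\sqrt{2}\,|\vec{A}_j|$ there, which is the bridge between the two Agler pairs.

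With this bridge the main assertion is immediate: if $q/(p+v)\in L^2(\T^2)$, Proposition \ref{lprop} applied to $p+v$ gives $|q|\le C(|\vec{B}_1|+|\vec{B}_2|)\le C\sqrt{2}(|\vec{A}_1|+|\vec{A}_2|)$ on $\T^2$, and Proposition \ref{lprop} applied to $p$ then yields $q/p\in L^2(\T^2)$. For the ``in particular'' clause I write $v=(p+v)-p$ and apply Proposition \ref{lprop} to the trivial $L^2(\T^2)$ functions $p/p=1$ and $(p+v)/(p+v)=1$ to obtain $|p|\le C_1(|\vec{A}_1|+|\vec{A}_2|)$ and $|p+v|\le C_2(|\vec{B}_1|+|\vec{B}_2|)\le C_2\sqrt{2}(|\vec{A}_1|+|\vec{A}_2|)$ on $\T^2$; the triangle inequality produces $|v|\le C(|\vec{A}_1|+|\vec{A}_2|)$ and Proposition \ref{lprop} delivers $v/p\in L^2(\T^2)$.

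The only nontrivial input is the uniqueness statement from Lemma 7.3 of \cite{GK}: Agler pairs themselves are highly non-unique (different vector polynomial choices can realize the same sum-of-squares decomposition), yet their pointwise magnitudes on $\T^2$ are invariants of $p$. This is what lets us compare Agler pairs of $p$ and $p\pm v$ without ever constructing them explicitly. Apart from that single appeal, every step is either the parallelogram identity or the triangle inequality.
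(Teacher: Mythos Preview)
Your proof is correct and follows essentially the same approach as the paper: average the Agler identities for $p\pm v$ to obtain an Agler pair for $p$, then use Proposition~\ref{lprop} to transfer the $L^2$ condition. The only cosmetic differences are that the paper works directly with the constructed Agler pair (so the separate appeal to Lemma~7.3 is unnecessary), and for the ``in particular'' clause the paper simply applies the main implication to $q=p+v$ (since $(p+v)/(p+v)=1\in L^2$) rather than bounding $|p|$ and $|p+v|$ separately.
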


\begin{proof}
We can rescale $v$ so that $p\pm v$ are scattering stable.
Let $(E_{\pm}, F_{\pm})$ be Agler pairs for $p\pm v$.
Since
\[
\begin{aligned}
|p\pm v|^2 - |\tilde{p} \pm v|^2 &= |p|^2-|\tilde{p}|^2 \pm 2\text{Re}(\overline{p-\tilde{p}}v)\\
&=(1-|z|^2)|E_{\pm}|^2 + (1-|w|^2)|F_{\pm}|^2
\end{aligned}
\]
it follows that for $E= \frac{1}{\sqrt{2}} \begin{pmatrix}  E_{+}\\ E_{-}\end{pmatrix}$ 
and $F=\begin{pmatrix} F_{+} \\ F_{-} \end{pmatrix}$, the pair $(E,F)$ is an Agler pair for $p$.
This comes from averaging the above formula over $+$ and $-$. 

Note that $|E| \geq \frac{1}{\sqrt{2}} |E_{\pm}|$, $|F| \geq \frac{1}{\sqrt{2}}|F_{\pm}|$
on $\T^2$.  These bounds along with Prop \ref{lprop} automatically
give that $q/(p+v) \in L^2(\T^2)$ implies $q/p \in L^2(\T^2)$ for $q\in \C[z,w]$.

Since $1 = (p+v)/(p+v) \in L^2(\T^2)$ we conclude that $(p+v)/p \in L^2$ and hence
$v/p\in L^2$.
\end{proof}
 
For any scattering stable polynomial $p\in \C[z,w]$ with
degree at most $(n,m)$ define the subspace
\[
K_p = \{q\in\C[z,w]: \deg q\leq (n-1,m-1) \text{ and } q/p\in L^2(\T^2)\}.
\]
Theorem B of \cite{GK} states that 
\[
\dim K_p = nm- \frac{1}{2} I_{\T^2}(p,\tilde{p})
\]
where $I_{\T^2}(p,\tilde{p})$ is the number
of common zeros of $p$ and $\tilde{p}$ on $\T^2$ counted with
multiplicities.  We can then conclude the following.

\begin{corollary}
Under the assumptions of Proposition \ref{propvp} we have
\[
\dim K_{p} \geq \dim K_{p+v}.
\]
\end{corollary}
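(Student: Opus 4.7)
The plan is to exhibit $K_{p+v}$ as a linear subspace of $K_p$ inside a common ambient polynomial space, and then read off the dimension inequality. Both $K_p$ and $K_{p+v}$ live inside the finite-dimensional vector space of $q \in \C[z,w]$ with $\deg q \leq (n-1,m-1)$, where $(n,m)$ is the bidegree of $p$. Since $\deg v \leq \deg \tilde{p} = (n,m)$, the polynomial $p+v$ also has bidegree at most $(n,m)$, so the same degree constraint is appropriate for defining $K_{p+v}$ and the comparison is meaningful.

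First I would pick an arbitrary $q \in K_{p+v}$. By definition this means $\deg q \leq (n-1,m-1)$ and $q/(p+v) \in L^2(\T^2)$. Next I would observe that the hypotheses of the corollary, namely that $p$ is scattering stable, $v=\tilde{v}$, and $p\pm v$ are both scattering stable, are precisely the hypotheses of Proposition \ref{propvp}. Applying that proposition to this $q$ gives $q/p \in L^2(\T^2)$. Combined with the unchanged degree bound, this yields $q \in K_p$.

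Thus $K_{p+v} \subseteq K_p$ as subspaces of a common ambient space, and the conclusion $\dim K_{p+v} \leq \dim K_p$ follows. In effect, the whole content of the corollary is a repackaging of Proposition \ref{propvp} into the language of the spaces $K_p$, so I do not expect any real obstacle at this step. Whatever difficulty there is lies upstream in Proposition \ref{propvp} itself, which depends on averaging Agler pairs of $p\pm v$ to produce an Agler pair of $p$ with $|E| \geq \tfrac{1}{\sqrt{2}}|E_\pm|$ and $|F| \geq \tfrac{1}{\sqrt{2}}|F_\pm|$ on $\T^2$, together with the characterization of $L^2$-integrability via Agler pairs (Lemma 7.3 and Corollary 7.4 of \cite{GK}).
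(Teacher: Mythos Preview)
Your proof is correct and is essentially the argument the paper intends: the corollary is placed immediately after Proposition~\ref{propvp}, and ``We can then conclude the following'' signals exactly the inclusion $K_{p+v}\subseteq K_p$ you wrote down. The paper also remarks that the same inequality follows alternatively from Theorem~\ref{thmmults} combined with the dimension formula $\dim K_p = nm - \tfrac{1}{2}I_{\T^2}(p,\tilde p)$, but your direct route via Proposition~\ref{propvp} is the primary one.
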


This also follows from Theorem \ref{thmmults}.  
As mentioned in the introduction $p$ is
$\T^2$-saturated if and only if $K_p = \{0\}$ (see Corollary 6.5 of \cite{GK}).  
We use this
fact in the proof of the second half of Theorem \ref{intthm}
which is stated in the following proposition.

\begin{prop}
Suppose $p \in \C[z,w]$ is scattering stable and
not $\T^2$-saturated.  Then, there exists nonzero $v\in \C[z,w]$
with $\deg v \leq (n,m), v=\tilde{v}, v(0,0)=0$ and $v/p \in L^2(\T^2)$.
\end{prop}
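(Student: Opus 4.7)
The plan is to invoke Corollary 6.5 of \cite{GK}, which identifies the hypothesis that $p$ is not $\T^2$-saturated with the condition $K_p \neq \{0\}$. Once a nonzero element of $K_p$ is in hand, the task reduces to promoting it to a polynomial of the slightly larger bidegree $(n,m)$ that is both self-reflective at multidegree $(n,m)$ and vanishes at the origin, while retaining $L^2(\T^2)$ integrability against $p$.

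So the starting point will be some nonzero $q \in K_p$, meaning $\deg q \leq (n-1, m-1)$ and $q/p \in L^2(\T^2)$. The key observation is that on $\T^2$ one has $|z|=|w|=1$, so multiplication by a monomial does not affect membership in $L^2(\T^2)$. I will set $u := zq$: this has bidegree at most $(n, m-1)\leq(n,m)$, satisfies $u(0,0) = 0$, and has $u/p \in L^2(\T^2)$. A direct computation shows that its $(n,m)$-reflection is $\tilde u = w \tilde q$, where $\tilde q$ denotes the reflection of $q$ at multidegree $(n-1,m-1)$. In particular $\tilde u$ also vanishes at the origin, has bidegree $\leq (n,m)$, and $|\tilde u| = |u|$ on $\T^2$ forces $\tilde u/p \in L^2(\T^2)$ as well.

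It remains to symmetrize. I will form
\[
v_1 := u + \tilde u, \qquad v_2 := i(u - \tilde u).
\]
Each is self-reflective at multidegree $(n,m)$ by direct inspection ($\widetilde{u+\tilde u}=\tilde u+u$ and $\widetilde{i(u-\tilde u)}=-i(\tilde u-u)$), each has bidegree $\leq (n,m)$, each vanishes at the origin (since both $u$ and $\tilde u$ do), and each $v_j/p$ lies in $L^2(\T^2)$. The only step requiring care is showing that the final $v$ can be taken nonzero: the main potential obstacle is that the naive single symmetrization $u+\tilde u$ could conceivably be identically zero (precisely when $\tilde u = -u$), which is why I carry the second candidate $v_2$. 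But if $v_1$ and $v_2$ were both identically zero, then $u=zq$ would be as well, contradicting $q\neq 0$; hence at least one of $v_1, v_2$ is the desired $v$, and the proposition follows.
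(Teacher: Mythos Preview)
Your proof is correct and follows essentially the same approach as the paper: both start from a nonzero $q\in K_p$ (via Corollary~6.5 of \cite{GK}) and form the self-reflective combination $zq+w\tilde{q}$, using that $|\tilde q|=|q|$ on $\T^2$ to get $L^2$ integrability. The only difference is how nonvanishing is secured: the paper first divides out any factors of $z$ from $q$ so that $zq+w\tilde{q}$ is automatically nonzero, whereas you keep the second candidate $i(u-\tilde u)$ and observe that $v_1=v_2=0$ would force $u=0$.
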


\begin{proof}
Since $p$ is not $\T^2$-saturated, there
exists $f \in K_p$, $f\ne 0$.  We can 
assume $z$ does not divide $f$ because otherwise $f/z \in K_p$ 
and we could divide out all of the factors of $z$ if necessary.

Let $\tilde{f}$ be the reflection of $f$ at degree $(n-1,m-1)$.
Set $v = w f + z \tilde{f}$.  Note $v$ is nonzero because $z$ does not
divide $f$.  Since $|f|=|\tilde{f}|$ on $\T^2$, we have $\tilde{f}/p \in L^2(\T^2)$.
Therefore, $v/p \in L^2(\T^2)$.  The other properties of $v$ are straightforward.
\end{proof}

\section{Determinantal representations and transfer function realizations}\label{secdets}
In this section we discuss a number of formulas which
are known to hold for scattering stable $p$ and corresponding $F=\tilde{p}/p$ 
in the case of two variables.  When $p$ is saturated it turns out
the formulas have a special form.  The techniques of this section are well-known
and this section is more of a supplement to the paper.  See \cite{Drexel, GKdv} 
for related results.

It is a well-established consequence 
of the sums of squares formula\eqref{sos} that if $p \in \C[z_1,z_2]$ is
scattering stable of bidegree $n=(n_1,n_2)$ then 
there exists a $(1+|n|)\times (1+|n|)$ unitary $U = \begin{pmatrix} A & B \\ C& D\end{pmatrix}$ 
where $A$ is $1\times 1$ such that
\begin{equation} \label{tfr}
F(z) = A+ B P(z) (1-D P(z))^{-1} C.
\end{equation}
Here 
\begin{equation} \label{Pmatrix}
P(z) = \begin{pmatrix} z_1 I_{n_1} & 0 \\ 0 & z_2 I_{n_2} \end{pmatrix}.
\end{equation}

We call the formula \eqref{tfr} a \emph{unitary transfer function realization} of $F$.

Since $F$ has denominator $p$, we see that $p$ divides $\det(I-D P(z))$.
But, $\det(I-D P(z))$ has bidegree at most $n$ 
we must have $p(z) = p(0) \det(I-D P(z))$.
Thus, $p$ has a \emph{contractive determinantal representation};
so-called because $D$ is a contractive matrix.
The matrix $D$ is not just contractive; 
it is also a rank one perturbation of a unitary matrix.
Indeed, the matrices 
\[
V_{\alpha} = D+\frac{\alpha}{1-\alpha A} CB
\]
are unitary for $\alpha \in \T$.
This follows from 
\[
U \begin{pmatrix} \frac{\alpha}{1-\alpha A} B \\ I_{|n|} \end{pmatrix} =
 \begin{pmatrix} \frac{1}{1-\alpha A} B \\ V_{\alpha} \end{pmatrix}
\]
which implies
\[
I-V_{\alpha}^*V_{\alpha} = \frac{1-|\alpha|^2}{|1-\alpha A|^2}B^*B
\]
and this is zero when $|\alpha|=1$.

Notice now that the numerator of $F(z)$ is 
\[
\tilde{p}(z) = p(0)\det(I-DP(z))F(z) =p(0)( A\det(I-DP(z)) +BP(z) \text{adj}(1-DP(z)) C)
\]
where $\text{adj}$ denotes the classical adjoint or adjugate matrix.
Observe that
\[
\begin{aligned}
p-\alpha \tilde{p} &= p(0)((1-\alpha A)\det(I-DP(z)) - \alpha BP(z) \text{adj}(I-DP(z))C)\\
&= p(0)(1-\alpha A)\det( I-DP(z) -\frac{\alpha}{1-\alpha A} CB P(z))\\
&= p(0)(1-\alpha A) \det( I - V_{\alpha} P(z)).
\end{aligned}
\]
The second line used the ``rank one matrix update formula''
\[
\det(M + xy^*) = \det M + y^*\text{adj}(M) x.
\]
We conclude that $p-\alpha \tilde{p}$ has a \emph{unitary determinantal representation}
whenever $\alpha \in \T$.  This gives a general way to describe the denominators
of elements of $PRPR_2$.

The matrix $U$ can be chosen with special structure when $p$ is saturated.

\begin{theorem}
If $p\in \C[z_1,z_2]$ is $\T^2$-saturated, then $F=\frac{\tilde{p}}{p}$ possesses
a symmetric unitary transfer function realization; i.e.
$U$ in \eqref{tfr} can be chosen to be a unitary and symmetric: $U^*=U^{-1}$ and $U=U^t$.
In particular, $p$ has a determinantal representation
using a symmetric contraction (which is a rank one perturbation of a symmetric unitary)
and $p-\tilde{p}$ has a symmetric unitary determinantal representation.
\end{theorem}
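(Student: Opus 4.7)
The plan is to exploit the uniqueness of the minimal unitary transfer function realization of $F$ that is forced by $\T^2$-saturation, combined with an Autonne--Takagi factorization of symmetric unitaries.

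First, fix a minimal Agler pair $(\vec A_1,\vec A_2)$ of $p$ with $\vec A_j\in\C^{n_j}[z_1,z_2]$ and extract from it, via the standard lurking isometry, a unitary $U=\begin{pmatrix}A&B\\C&D\end{pmatrix}$ of size $(1+|n|)\times(1+|n|)$ satisfying \eqref{tfr}. Since $F$ is scalar-valued and $P(z)$ is symmetric (being diagonal), taking the transpose of \eqref{tfr} shows that $U^t$ is \emph{another} unitary transfer function realization of the same $F$ with the same dimensions and block structure.

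Now invoke saturation. By Corollary 13.6 of \cite{GK}, when $p$ is $\T^2$-saturated the minimal Agler pair is unique up to left multiplication of each $\vec A_j$ by a unitary constant; this translates into uniqueness of the minimal $U$ up to conjugation $U\mapsto V^*UV$ by block unitaries $V=1\oplus V_1\oplus V_2$. In particular, $U^t=V^*UV$ for some such $V$. Transposing this identity, substituting back, and using the identity $V^t\bar V=I$ valid for any unitary $V$ shows that $V^tV^*=1\oplus V_1^tV_1^*\oplus V_2^tV_2^*$ commutes with $U$. The same uniqueness forces the block-unitary commutant of $U$ to consist of scalars, so $V_j^tV_j^*=\lambda_jI$ with $\lambda_j\in\T$; iterating transpose then gives $\lambda_j^2=1$, so $\lambda_j=\pm 1$. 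A choice $\lambda_j=-1$ forces, via the commutation relations, $B_j$, $C_j$, and all off-diagonal $D$-blocks touching index $j$ to vanish, decoupling the realization and contradicting minimality. Hence $V$ is symmetric.

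Applying Autonne--Takagi block-wise writes each $V_j=W_jW_j^t$ for a unitary $W_j$; set $W=1\oplus W_1\oplus W_2$, so $V=WW^t$. A direct calculation using $W^t\bar W=I$ yields
\[
(W^*UW)^t=W^tU^t\bar W=W^t(V^*UV)\bar W=W^*UW,
\]
so $\widetilde U:=W^*UW$ is a symmetric unitary transfer function realization of $F$. The symmetric determinantal representations of $p$ and of $p-\tilde p$ stated in the theorem then follow immediately from the formulas $p=p(0)\det(I-DP)$ and $p-\tilde p=p(0)(1-A)\det(I-V_1P)$ derived earlier in the section, since the conjugated blocks $\widetilde D$ and $\widetilde V_1=\widetilde D+(1-\widetilde A)^{-1}\widetilde C\widetilde B$ inherit symmetry from $\widetilde U$. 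The main obstacle is justifying that the block-unitary commutant of the minimal $U$ consists only of scalars: this Schur-lemma-type claim follows morally from Corollary 13.6 of \cite{GK}, but executing it rigorously may require revisiting the uniqueness proof rather than merely quoting the corollary.
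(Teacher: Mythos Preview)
Your route is genuinely different from the paper's, and the gap you flag at the end is real and not easily closed.

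The paper does not argue via uniqueness of the realization at all. Instead it first records (as a consequence of Corollary~13.6 of \cite{GK} together with Theorem~1.1.5 of \cite{GKpnoz}) that for saturated $p$ the Agler pair $(E_1,E_2)$ can be chosen to satisfy the reflection symmetry $E_j=\tilde E_j$. With that symmetry in hand, the lurking-isometry vectors $X(z)=(p,z_1E_1,z_2E_2)^t$ and $Y(z)=(\tilde p,E_1,E_2)^t$ satisfy $Y=\tilde X$, and one checks directly that $v\in(\text{span}\,X)^\perp$ iff $\bar v\in(\text{span}\,Y)^\perp$. The paper then \emph{constructs} a symmetric extension by picking an orthonormal basis $v_1,\dots,v_k$ of $(\text{span}\,X)^\perp$ and declaring $v_j\mapsto\bar v_j$; reflecting the defining relations shows the resulting $U$ equals $U^t$. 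No commutant analysis, no uniqueness-up-to-conjugacy statement, and no need to know whether $\text{span}\{X(z)\}=\C^N$.

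Your argument, by contrast, needs two facts that do not follow from Corollary~13.6 as stated. First, ``uniqueness of the minimal $U$ up to block-unitary conjugation'' already presumes $\text{span}\{X(z)\}=\C^N$; otherwise the lurking isometry has genuine freedom on the complement and $U^t$ need not be conjugate to $U$ by anything block-diagonal. The paper explicitly says this spanning property is true but ``difficult to retrieve'' and deliberately proceeds without it. Second, even granting that $U^t=V^*UV$, the step ``the block-unitary commutant of $U$ consists of scalars'' is a Schur-type irreducibility statement that is strictly stronger than uniqueness of the Agler pair: knowing that every Agler pair is $(\,W_1E_1,W_2E_2\,)$ for some unitaries $W_j$ tells you nothing about which $W_j$ fix $E_j$ or commute with $U$. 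So the obstacle you name is not a technicality to be filled in later; it is the heart of the matter, and the paper's approach succeeds precisely because it sidesteps it by exploiting $E_j=\tilde E_j$ directly.
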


This theorem can be subdivided
into two parts.

\begin{prop}
If $p\in \C[z_1,z_2]$ is $\T^2$-saturated with $\deg p = (n_1,n_2)$
then 
there exist $E_1 \in \C^{n_1}[z_1,z_2], E_2 \in \C^{n_2}[z_1,z_2]$
of bidegrees at most $(n_1-1,n_2), (n_1,n_2-1)$
such that
\begin{equation} \label{sos2}
|p(z)|^2 -|\tilde{p}(z)|^2 = (1-|z_1|^2)|E_1(z)|^2 + (1-|z_2|^2)|E_2(z)|^2
\end{equation}
and
\begin{equation} \label{symmetry}
E_1(z) = \underset{:=\tilde{E}_1}{\underbrace{z_1^{n_1-1} z_2^{n_2} \overline{E_1(1/\bar{z}_1, 1/\bar{z}_2)}}}
\qquad E_2(z) = \underset{:=\tilde{E}_2}{\underbrace{z_1^{n_1} z_2^{n_2-1} \overline{E_2(1/\bar{z}_1,1/\bar{z}_2)}}}.
\end{equation}
\end{prop}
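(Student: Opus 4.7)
My plan is to start from any minimal Agler pair for $p$ and use the uniqueness afforded by $\T^2$-saturation, together with a reflection argument, to deform the pair into one that is self-reflexive.

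First, I would take an Agler pair $(E_1,E_2)$ with $E_j\in \C^{n_j}[z_1,z_2]$ having bidegree at most $(n_1-1,n_2)$ and $(n_1,n_2-1)$ respectively; existence of such a pair with minimal lengths $n_j$ is standard for scattering stable $p$ of bidegree $(n_1,n_2)$ (see \cite{CW, GKpnoz}). Substituting $z\mapsto (1/\bar z_1,1/\bar z_2)$ into \eqref{sos2} and multiplying through by $|z_1|^{2n_1}|z_2|^{2n_2}$ converts the identity, on $\T^2$, into the same shape with $E_j$ replaced by its componentwise reflection $\tilde E_j$ at the indicated bidegrees. Thus $(\tilde E_1,\tilde E_2)$ is again a minimal Agler pair for $p$.

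Next I would invoke Corollary 13.6 of \cite{GK}, which for $\T^2$-saturated $p$ asserts uniqueness of $SOS_1,SOS_2$; combined with the fact that two minimal vector sum-of-squares factorizations of the same positive bivariate trigonometric polynomial on $\T^2$ differ by a constant unitary, this produces unitaries $U_j\in \C^{n_j\times n_j}$ with $\tilde E_j = U_j E_j$. Applying reflection once more, together with the identity $\widetilde{U_j E_j}=\overline{U_j}\,\tilde E_j$ and the involutivity $\tilde{\tilde E_j}=E_j$, yields $E_j=\overline{U_j}\,U_j E_j$; since the $n_j$ components of $E_j$ are linearly independent (minimality), we must have $\overline{U_j}\,U_j=I$, which, given $U_j^*U_j=I$, is equivalent to $U_j=U_j^T$: each $U_j$ is a complex symmetric unitary.

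To finish, I would apply the Autonne-Takagi factorization: every symmetric unitary admits a representation $U_j=W_j W_j^T$ with $W_j$ unitary, so that $V_j:=W_j^T$ satisfies $V_j^T V_j = U_j$. Replacing $E_j$ by $V_j E_j$ preserves the Agler identity \eqref{sos2} and the bidegree bounds (constant matrices disturb neither), and a one-line calculation, using the identity $\overline{V_j} V_j^T = I$ obtained from $V_j^*V_j=I$ by conjugation, gives
\[
\widetilde{V_j E_j}=\overline{V_j}\,\tilde E_j=\overline{V_j}\, U_j E_j = \overline{V_j}\, V_j^T V_j E_j = V_j E_j,
\]
which is the required symmetry \eqref{symmetry}. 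The main obstacle I expect is justifying the passage from uniqueness of $|E_j|^2$ as a function on $\T^2$ (which is what Corollary 13.6 of \cite{GK} directly provides) to the stronger statement that two minimal Agler pairs differ by a constant unitary intertwiner. I anticipate that this step will require unpacking the canonical Hilbert space structure implicit in the construction of Agler pairs in \cite{GK}, where the components of a minimal $E_j$ appear as a basis determined by $p$ up to a unitary rotation.
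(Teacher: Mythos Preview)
Your proposal is correct and is essentially the argument the paper has in mind: the paper's proof consists of the single sentence ``This follows directly from Theorem 1.1.5 of \cite{GKpnoz} and Corollary 13.6 of \cite{GK},'' together with the remark that $|E_j|=|\tilde E_j|$ suffices and that the upgrade uses the factorization of a symmetric unitary as $V^tV$; you have simply unpacked those citations into the reflection/Autonne--Takagi computation. The obstacle you flag---passing from uniqueness of $|E_j|^2$ to a constant unitary intertwiner---is precisely the step the paper defers to the proof of Theorem~1.1.5 of \cite{GKpnoz}; note that since Corollary~13.6 of \cite{GK} gives uniqueness of $SOS_j$ as a polynomial identity on all of $\C^2$ (not merely on $\T^2$), polarization yields $E_j(w)^*E_j(z)=\tilde E_j(w)^*\tilde E_j(z)$, after which linear independence of the $n_j$ components (minimality) makes the lurking-isometry unitary $U_j$ uniquely determined, so your concern dissolves. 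One cosmetic point: your phrase ``on $\T^2$'' in the reflection step is misleading, since the sums-of-squares identity is trivially $0=0$ there; the substitution $z\mapsto(1/\bar z_1,1/\bar z_2)$ followed by multiplication by $|z_1|^{2n_1}|z_2|^{2n_2}$ yields the reflected identity on all of $\C^2$, which is what you actually need.
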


This follows directly from 
Theorem 1.1.5 of \cite{GKpnoz} and Corollary 13.6 of \cite{GK}.
The condition in \eqref{symmetry} can be replaced with
the weaker conditions $|E_1|=|\tilde{E}_1|$, $|E_2| = |\tilde{E}_2|$.
(An argument is given in 
the proof of Theorem 1.1.5 of \cite{GKpnoz}.  
It uses the fact that any symmetric unitary $U$ can be
factored as $U=V^tV$ where $V$ is unitary.)

\begin{prop} \label{symmetryprop}
If $p$ is scattering stable and possesses a sums of squares decomposition
as in \eqref{sos2} which in addition satisfies
\eqref{symmetry}, then $F=\frac{\tilde{p}}{p}$ satisfies \eqref{tfr}
where $U$ may be chosen to be symmetric.
\end{prop}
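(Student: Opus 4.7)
The plan is to run the classical ``lurking isometry'' / Kolmogorov factorization producing a transfer function realization \eqref{tfr}, and then exploit the symmetry $\tilde E_j = E_j$ in \eqref{symmetry} to force the resulting unitary $U$ to be symmetric. First, I polarize the diagonal identity \eqref{sos2} to the hereditary two-point form
\[
p(z)\overline{p(\zeta)} - \tilde p(z)\overline{\tilde p(\zeta)} = (1-z_1\bar\zeta_1)\, E_1(z)^T\overline{E_1(\zeta)} + (1-z_2\bar\zeta_2)\, E_2(z)^T\overline{E_2(\zeta)},
\]
valid for $z,\zeta\in\C^2$, since both sides are polynomial in $z$ and $\bar\zeta$ and agree when $\bar\zeta=\bar z$. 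Moving the $\tilde p$ and ungrouped Agler terms to opposite sides, the identity reads $\langle v(z),v(\zeta)\rangle = \langle w(z),w(\zeta)\rangle$ for
\[
v(z) = \begin{pmatrix} p(z) \\ z_1 E_1(z) \\ z_2 E_2(z) \end{pmatrix}, \qquad w(z) = \begin{pmatrix} \tilde p(z) \\ E_1(z) \\ E_2(z) \end{pmatrix} \in \C^{1+|n|}.
\]
This identifies an isometry $v(z)\mapsto w(z)$ between the two spans in $\C^{1+|n|}$, which I lift to a unitary $U = \begin{pmatrix} A & B \\ C & D\end{pmatrix}$ with $A\in\C$. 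Rearranging $Uv(z)=w(z)$, with $P(z)$ from \eqref{Pmatrix}, yields exactly \eqref{tfr}.

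The new ingredient is the symmetry \eqref{symmetry}. Writing $R[q](z)=z_1^{n_1}z_2^{n_2}\overline{q(1/\bar z)}$ for the entrywise reflection at bidegree $(n_1,n_2)$, the identity $\tilde{\tilde p}=p$ together with $\tilde E_j = E_j$ (applied at the appropriate bidegree) implies that $R[v(z)]=w(z)$ and $R[w(z)]=v(z)$. Substituting $z\mapsto 1/\bar z$ into $Uv(z)=w(z)$, conjugating entrywise, and multiplying through by $z_1^{n_1}z_2^{n_2}$ transforms the identity into
\[
\bar U\, w(z) = v(z).
\]
Combined with $Uv(z)=w(z)$ and unitarity of $U$, this forces $\bar U U$ to fix every $v(z)$; equivalently, $U=U^t$ holds on $\mathrm{span}\{v(z):z\in\C^2\}$.

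It remains to choose the unitary extension so that $\bar U U = I$ holds on all of $\C^{1+|n|}$. When the span above is full this is automatic. Otherwise, the two defect subspaces have equal dimension and $R$ provides a conjugate-linear pairing between them, so a symmetric extension of the partial isometry can be chosen by a finite-dimensional Takagi-type argument; equivalently, following the remark from \cite{GKpnoz} that every symmetric unitary has the form $V^t V$, one constructs such a $V$ from the symmetric Agler pair and sets $U=V^t V$. This symmetric extension step is the main obstacle: a generic unitary extension will destroy the reflection relation, and one must use the conjugate-linear structure afforded by \eqref{symmetry} in order to preserve $\bar U U = I$ on the complement of $\mathrm{span}\{v(z)\}$.
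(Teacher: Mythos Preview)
Your approach is essentially the same as the paper's: polarize \eqref{sos2}, run the lurking isometry with the very same vectors $v(z)=X(z)$ and $w(z)=Y(z)$, and use the reflection symmetry $\tilde X = Y$ to deduce $U^t X(z)=\tilde X(z)$ from $U X(z)=\tilde X(z)$. That part is fine.

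The gap is exactly where you flag it: the extension off $\mathrm{span}\{v(z)\}$. Invoking a ``Takagi-type argument'' or the factorization $U=V^tV$ does not do the job as written; $V^tV$ tells you how to factor an already-symmetric unitary, not how to extend a partial isometry to a symmetric unitary, and your sentence ``one constructs such a $V$ from the symmetric Agler pair'' has no content. What you actually need is the concrete observation that complex conjugation (not $R$, which acts on polynomials, not on constant vectors) is a bijection between the defect spaces $S_1=\{X(z)\}^\perp$ and $S_2=\{\tilde X(z)\}^\perp$: indeed $X(z)^*v=0$ for all $z$ iff $\tilde X(z)^*\bar v=0$ for all $z$. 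Then pick an orthonormal basis $v_1,\dots,v_k$ of $S_1$ and \emph{define} the extension by $Uv_j=\bar v_j$. Conjugating $Uv_j=\bar v_j$ gives $\bar U\,\bar v_j=v_j$, hence $U^t v_j=\bar U^{-1}v_j=\bar v_j$; combined with $U^tX(z)=\tilde X(z)$ on $S_1^\perp$, this yields $U=U^t$ on all of $\C^{1+|n|}$. That explicit choice is the missing step in your write-up and is exactly how the paper proceeds.
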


Symmetry in the
sums of squares formulas does not   
characterize saturated polynomials.

\begin{example}
Let $p = (3-z_1-z_2)^2$---a non-saturated polynomial.  Then, $\tilde{p} = (3z_1 z_2-z_1-z_2)^2$
and \eqref{sos2} holds with
\[
E_1 = \begin{pmatrix} q(z_2)p(z) \\ \tilde{q}(z_2) \tilde{p}(z) \end{pmatrix} 
\qquad E_2 = \begin{pmatrix}  \tilde{q}(z_1) p(z) \\ q(z_1) \tilde{p}(z) \end{pmatrix}
\]
where $q(\zeta) = \sqrt{3}\left( \frac{1-\sqrt{5}}{2} + \frac{1+\sqrt{5}}{2} \zeta\right)$
and $\tilde{q}$ is the reflection at degree $1$ of this one variable polynomial.
Evidently, $|E_j|=|\tilde{E}_j|$.
\end{example}

It may be possible to use
the characterization of all sums of squares
decompositions \eqref{sos2} given in \cite{GK} 
to characterize which $p$ have decompositions satisfying
\eqref{symmetry}.

We now prove Proposition \ref{symmetryprop}.
Unfortunately, the only way we see to 
give a comprehensible proof is to rehash 
a number of standard arguments (including 
the proof of \eqref{tfr}).  

\begin{proof}[Proof of Proposition \ref{symmetryprop}]
Initially we do not need \eqref{symmetry}.

Equation \eqref{sos2} can be polarized to the following
form
\begin{equation} \label{polarized}
\overline{p(w)}p(z) - \overline{\tilde{p}(w)}\tilde{p}(z) = 
(1-\bar{w}_1 z_1) E_1(w)^* E_1(z) + (1-\bar{w}_2 z_2) E_2(w)^* E_2(z)
\end{equation}
by the polarization theorem for holomorphic functions.  
We now use what is called a lurking isometry argument.
Let $N=1+n_1+n_2$.
Define 
\[
X(z):= \begin{pmatrix} p(z) \\ z_1 E_1(z) \\ z_2 E_2(z) \end{pmatrix} \qquad 
Y(z) := \begin{pmatrix} \tilde{p}(z) \\ E_1(z) \\ E_2(z) \end{pmatrix}.
\]
Then, \eqref{polarized} rearranges into $X(w)^*X(z) = Y(w)^*Y(z)$.
The map
\begin{equation} \label{mapsto}
X(z) \mapsto 
Y(z) 
\end{equation}
extends to a well-defined linear isometry from the span of the elements on 
the left to the span of the elements on the right (as $z$ varies over $\C^2$).
It turns out that $\text{span}\{X(z): z\in \C^2\} = \C^{N}$ and this
implies the map above actually extends uniquely to a unitary.
This follows from the construction of $E_1,E_2$ in \cite{GKpnoz}.  
However, as this fact is difficult to retrieve from \cite{GKpnoz} without
introducing too much machinery we proceed without it.  
The map \eqref{mapsto} can always be extended to some unitary; we would like
to show that if $E_1,E_2$ satisfy \eqref{symmetry} then we can extend to 
a symmetric unitary.  

So, suppose \eqref{symmetry} holds and 
notice that $Y = \tilde{X}$, with the reflection performed at the
degree $(n_1,n_2)$.
We introduce the orthogonal
complements of the spans of the left and right sides in \eqref{mapsto}
\[
\begin{aligned}
S_1 &=\text{span}\{X(z):z\in \C^2\}^{\perp} \\
S_2 &=\text{span}\{\tilde{X}(z): z \in \C^2\}^{\perp} 
\end{aligned}
\]
Notice that $v \in S_1$ iff $\overline{v} \in S_2$ because we can reflect
the equation $X^* v= 0$ to obtain $\tilde{X}^* \overline{v} = 0$.
Let $v_1,\dots, v_k$ be an orthonormal basis for $S_1$.  Then,
$\bar{v}_1,\dots, \bar{v}_k$ is an orthonormal basis for $S_2$.
We can then extend the map \eqref{mapsto} to all of $\C^N$
by mapping $v_j \mapsto \bar{v}_j$ 
and extending linearly.  Note that we are not saying $v\mapsto \bar{v}$ for
a general $v\in S_1$.  We get an $N\times N$ unitary matrix $U$ with the property
\[
U X(z) = \tilde{X}(z) \text{ and } U v_j = \bar{v}_j.
\]
Applying the reflection operation at the degree $(n_1,n_2)$ and conjugation reveals
\[
U^t X(z) = \tilde{X}(z) \text{ and } U^t v_j = \bar{v}_j.
\]
Therefore, $U=U^t$.

Write $U = \begin{pmatrix} A & B \\ C & D \end{pmatrix}$ where $A$ is $1\times 1$;
the sizes of $B,C,D$ are then determined.
Letting $E = \begin{pmatrix} E_1 \\ E_2 \end{pmatrix}$
we have
\[
\begin{aligned}
A p(z) + B P(z) E(z) &= \tilde{p} \\
C p(z) + D P(z) E(z) &= E(z)
\end{aligned}
\]
where $P(z)$ is given in \eqref{Pmatrix}.
The second line implies $p(z)(I-D P(z))^{-1}C = E(z)$ which implies 
$A+BP(z)(I-DP(z))^{-1}C = \tilde{p}/p$ via the first line.
\end{proof}

\section{Acknowledgments}
I would like to thank Mike Jury, John McCarthy, and James Pascoe for useful 
discussions.  Thanks to Pascoe and Klep for disproving 
a conjectured generalization of Theorem \ref{thm:mobius}.

\begin{bibdiv}
\begin{biblist}

\bib{BK}{article}{
   author={Bickel, Kelly},
   author={Knese, Greg},
   title={Canonical Agler decompositions and transfer function realizations},
   journal={Trans. Amer. Math. Soc.},
   volume={368},
   date={2016},
   number={9},
   pages={6293--6324},
   issn={0002-9947},
   review={\MR{3461035}},
   doi={10.1090/tran/6542},
}

\bib{CW}{article}{
   author={Cole, Brian J.},
   author={Wermer, John},
   title={Ando's theorem and sums of squares},
   journal={Indiana Univ. Math. J.},
   volume={48},
   date={1999},
   number={3},
   pages={767--791},
   issn={0022-2518},
   review={\MR{1736979}},
   doi={10.1512/iumj.1999.48.1716},
}

\bib{Fischer}{book}{
   author={Fischer, Gerd},
   title={Plane algebraic curves},
   series={Student Mathematical Library},
   volume={15},
   note={Translated from the 1994 German original by Leslie Kay},
   publisher={American Mathematical Society, Providence, RI},
   date={2001},
   pages={xvi+229},
   isbn={0-8218-2122-9},
   review={\MR{1836037}},
   doi={10.1090/stml/015},
}

\bib{Fulton}{book}{
   author={Fulton, William},
   title={Algebraic curves},
   series={Advanced Book Classics},
   note={An introduction to algebraic geometry;
   Notes written with the collaboration of Richard Weiss;
   Reprint of 1969 original},
   publisher={Addison-Wesley Publishing Company, Advanced Book Program,
   Redwood City, CA},
   date={1989},
   pages={xxii+226},
   isbn={0-201-51010-3},
   review={\MR{1042981}},
}

\bib{Forelli}{article}{
   author={Forelli, Frank},
   title={A necessary condition on the extreme points of a class of
   holomorphic functions. II},
   journal={Pacific J. Math.},
   volume={92},
   date={1981},
   number={2},
   pages={277--281},
   issn={0030-8730},
   review={\MR{618065 (82k:32021)}},
}

\bib{GW}{article}{
   author={Geronimo, Jeffrey S.},
   author={Woerdeman, Hugo J.},
   title={Positive extensions, Fej\'er-Riesz factorization and autoregressive
   filters in two variables},
   journal={Ann. of Math. (2)},
   volume={160},
   date={2004},
   number={3},
   pages={839--906},
   issn={0003-486X},
   review={\MR{2144970}},
   doi={10.4007/annals.2004.160.839},
}

\bib{Drexel}{article}{
   author={Grinshpan, Anatolii},
   author={Kaliuzhnyi-Verbovetskyi, Dmitry S.},
   author={Vinnikov, Victor},
   author={Woerdeman, Hugo J.},
   title={Stable and real-zero polynomials in two variables},
   journal={Multidimens. Syst. Signal Process.},
   volume={27},
   date={2016},
   number={1},
   pages={1--26},
   issn={0923-6082},
   review={\MR{3441374}},
   doi={10.1007/s11045-014-0286-3},
}

\bib{GKdv}{article}{
   author={Knese, Greg},
   title={Polynomials defining distinguished varieties},
   journal={Trans. Amer. Math. Soc.},
   volume={362},
   date={2010},
   number={11},
   pages={5635--5655},
   issn={0002-9947},
   review={\MR{2661491}},
   doi={10.1090/S0002-9947-2010-05275-4},
}

\bib{GKpnoz}{article}{
   author={Knese, Greg},
   title={Polynomials with no zeros on the bidisk},
   journal={Anal. PDE},
   volume={3},
   date={2010},
   number={2},
   pages={109--149},
   issn={2157-5045},
   review={\MR{2657451}},
   doi={10.2140/apde.2010.3.109},
}

\bib{GK}{article}{
   author={Knese, Greg},
   title={Integrability and regularity of rational functions},
   journal={Proc. Lond. Math. Soc. (3)},
   volume={111},
   date={2015},
   number={6},
   pages={1261--1306},
   issn={0024-6115},
   review={\MR{3447794}},
   doi={10.1112/plms/pdv061},
}
		
\bib{LT}{book}{
   author={Lancaster, Peter},
   author={Tismenetsky, Miron},
   title={The theory of matrices},
   series={Computer Science and Applied Mathematics},
   edition={2},
   publisher={Academic Press, Inc., Orlando, FL},
   date={1985},
   pages={xv+570},
   isbn={0-12-435560-9},
   review={\MR{792300}},
}

\bib{McDonald1}{article}{
   author={McDonald, John N.},
   title={Holomorphic functions on the polydisc having positive real part},
   journal={Michigan Math. J.},
   volume={34},
   date={1987},
   number={1},
   pages={77--84},
   issn={0026-2285},
   review={\MR{873021 (88b:32004)}},
   doi={10.1307/mmj/1029003484},
}

\bib{McDonald2}{article}{
   author={McDonald, John N.},
   title={An extreme absolutely continuous ${\rm RP}$-measure},
   journal={Proc. Amer. Math. Soc.},
   volume={109},
   date={1990},
   number={3},
   pages={731--738},
   issn={0002-9939},
   review={\MR{1017849 (90k:32005)}},
   doi={10.2307/2048213},
}

\bib{PtakYoung}{article}{
   author={Pt{\'a}k, Vlastimil},
   author={Young, N. J.},
   title={A generalization of the zero location theorem of Schur and Cohn},
   journal={IEEE Trans. Automat. Control},
   volume={25},
   date={1980},
   number={5},
   pages={978--980},
   issn={0018-9286},
   review={\MR{595237}},
   doi={10.1109/TAC.1980.1102476},
}

\bib{FTIP}{book}{
   author={Rudin, Walter},
   title={Function theory in polydiscs},
   publisher={W. A. Benjamin, Inc., New York-Amsterdam},
   date={1969},
   pages={vii+188},
   review={\MR{0255841}},
}

\bib{RudinICM}{article}{
   author={Rudin, Walter},
   title={Harmonic analysis in polydiscs},
   conference={
      title={Actes du Congr\`es International des Math\'ematiciens},
      address={Nice},
      date={1970},
   },
   book={
      publisher={Gauthier-Villars, Paris},
   },
   date={1971},
   pages={489--493},
   review={\MR{0422657}},
}

\end{biblist}
\end{bibdiv}

\end{document}